\newtheorem{theorem}{Theorem}
\newtheorem{lemma}[theorem]{Lemma}
\newtheorem{corollary}[theorem]{Corollary}
\newtheorem{definition}[theorem]{Definition}
\newtheorem{remark}[theorem]{Remark}
\renewenvironment{proof}{{\bf Proof:}}{\hfill\rule{2mm}{2mm}}
\newcommand{\inv}{^{-1}}                            
\newcommand{\sminus}{\backslash}                    
\newcommand{\R}{\mathbb{R}}                         
\newcommand{\e}{\varepsilon}                        
\newcommand{\X}{\mathcal{X}}                        
\newcommand{\XX}{\mathbb{X}}                        
\newcommand{\B}{\mathcal{B}}                        
\newcommand{\E}{\mathop{\mathbb{E}}}                
\newcommand{\Var}{\mathbb{V}}                       
\newcommand{\pr}{\mathbb{P}}                        
\newcommand{\dist}{\operatorname{dist}}             
\newcommand{\diam}{\operatorname{diam}}             
\newcommand{\acro}[1]{\textsc{\MakeLowercase{#1}}}
\renewcommand{\phi}{\varphi} 
\title{Finite-Sample Analysis of Fixed-$k$ Nearest Neighbor Density Functional
Estimators}
\author{
  Shashank Singh \\
  Statistics \& Machine Learning Departments \\
  Carnegie Mellon University \\
  Pittsburgh, PA 15213 \\
  \texttt{sss1@andrew.cmu.edu} \\
  \And
  Barnab\'as P\'oczos \\
  Machine Learning Departments \\
  Carnegie Mellon University \\
  Pittsburgh, PA 15213 \\
  \texttt{bapoczos@cs.cmu.edu} \\
}
\begin{document}

\maketitle

\begin{abstract}
We provide finite-sample analysis of a general framework for using $k$-nearest
neighbor statistics to estimate functionals of a nonparametric continuous
probability density, including entropies and divergences. Rather than plugging
a consistent density estimate (which requires $k \to \infty$ as the sample size
$n \to \infty$) into the functional of
interest, the estimators we consider fix $k$ and perform a bias correction.
This is more efficient computationally, and, as we show in certain cases,
statistically, leading to faster convergence rates. Our framework unifies
several previous estimators, for most of which ours are the first finite sample
guarantees.
\end{abstract}

\section{Introduction}
\label{sec:introduction}
Estimating entropies and divergences of probability distributions in a
consistent manner is of importance in a number problems in machine learning.
Entropy estimators have applications in
goodness-of-fit testing \citep{goria05new},
parameter estimation in semi-parametric models \citep{Wolsztynski85minimum},
studying fractal random walks \citep{Alemany94fractal},
and texture classification \citep{hero02alpha,hero2002aes}.
Divergence estimators have been used to generalize machine learning algorithms
for regression, classification, and clustering from inputs in $\R^D$ to sets and
distributions \citep{poczos12kernelimages,oliva13ICML}.

Divergences also include mutual informations as a special case; mutual
information estimators have applications in
feature selection \citep{peng05feature},
clustering \citep{aghagolzadeh07hierarchical},
causality detection \citep{Hlavackova07causality},
optimal experimental design \citep{lewi07realtime, poczos09identification},
f\acro{MRI} data analysis \citep{chai09exploring},
prediction of protein structures \citep{adami04information},
and boosting and facial expression recognition \cite{Shan05conditionalmutual}.
Both entropy estimators and mutual information estimators have been used for
independent component and subspace analysis
\citep{radical03,szabo07undercomplete_TCC, poczos05geodesic,Hulle08constrained},
as well as for image registration
\citep{kybic06incremental,hero02alpha,hero2002aes}.
Further applications can be found in \citet{Leonenko-Pronzato-Savani2008}.

This paper considers the more general problem of using $n$ IID samples from $P$
to estimate functionals of the form
\begin{equation}
F(P) := \E_{X \sim P} \left[ f(p(X)) \right],
\label{eq:func_form}
\end{equation}
where $P$ is an unknown probability measure with smooth density function $p$
and $f$ is a known smooth function. We are interested in analyzing a class of
nonparametric estimators based on $k$-nearest neighbor ($k$-NN) distance
statistics. Rather than plugging a consistent estimator of $p$ into
(\ref{eq:func_form}), which requires $k \to \infty$ as $n \to \infty$, these
estimators derive a bias correction for the plug-in estimator with \emph{fixed}
$k$; hence, we refer to this type of estimator as a fixed-$k$ estimator.
Compared to plug-in estimators, fixed-$k$ estimators are faster to compute. As
we show, fixed-$k$ estimators can also exhibit superior rates of convergence.

As shown in Table \ref{table:bias_corrections}, several authors have derived
bias corrections necessary for fixed-$k$ estimators of entropies and
divergences, including, most famously, the Shannon entropy estimator of
\citet{kozachenko87statistical}.
\footnote{MATLAB implementations of many of these estimators can be found in
the Information Theoretical Estimators toolbox available at
\url{https://bitbucket.org/szzoli/ite/}. \citep{szabo14ITE}.}
The estimators in Table
\ref{table:bias_corrections} estimators are known to be weakly consistent.
\footnote{Several of these proofs contain errors regarding the use of integral
convergence theorems when their conditions do not hold, as described in
\citet{poczos11AISTATS}.}
However, for most of these estimators, no finite sample bounds are known. The
{\bf main goal of this paper} is to provide finite-sample analysis of these
estimators, via a unified analysis of the estimator after bias correction.
Specifically, we will show conditions under which, for $\beta$-H\"older
continuous ($\beta \in (0, 2]$) densities on $D$ dimensional space, the bias of
fixed-$k$ estimators decays as $O \left( n^{-\beta/D} \right)$ and
the variance decays as $O \left( n \inv \right)$, giving a mean squared error
of $O \left( n^{-2\beta/D} + n\inv \right)$. Hence, the estimators
converge at the parametric $O(n\inv)$ rate when $\beta \geq D/2$, and at the
slower rate $O(n^{-2\beta/D})$ otherwise. A modification of the estimators
would be necessary to leverage additional smoothness for $\beta > 2$, but we do
not pursue this here. Along the way, we also prove a finite-sample version of
the useful fact \citep{Leonenko-Pronzato-Savani2008} that (appropriately
normalized) $k$-NN distances have an asymptotic Erlang distribution, which may
be of independent interest.

\begin{table}
{\renewcommand{\arraystretch}{1.1} 
\begin{tabular}{|p{24mm}|c|p{34mm}|p{40mm}|}
\hline
Functional Name & Functional Form & Correction & Reference \\
\hline
Shannon Entropy & $\E \left[ \log p(X) \right]$ & Additive constant: $\psi(n) -
\psi(k) + \log(k/n)$ & \citet{kozachenko87statistical}\citet{goria05new} \\
\hline
R\'enyi-$\alpha$ Entropy & $\E \left[ p^{\alpha - 1}(X) \right]$ &
Multiplicative constant: $\frac{\Gamma(k)}{\Gamma(k + 1 - \alpha)}$ & \citet{Leonenko-Pronzato-Savani2008,leonenko10correction} \\
\hline
KL Divergence & $\E \left[ \log \frac{p(X)}{q(X)} \right]$ & None$^*$ & \citet{Wang-Kulkarni-Verdu2009} \\
\hline
$\alpha$-Divergence & $\E \left[ \left( \frac{p(X)}{q(X)} \right)^{\alpha - 1} \right]$ & Multiplicative constant: $\frac{\Gamma^2(k)}{\Gamma(k - \alpha + 1)\Gamma(k + \alpha - 1)}$ & \citet{poczos11AISTATS} \\
\hline
\end{tabular}
}
\caption{Table of functionals with known bias-corrected $k$-NN estimators, the
type of bias correction necessary, the correction constant, and references. All
expectations are over $X \sim P$. $\Gamma(t) = \int_0^\infty x^{t - 1}
e^{-x} \, dx$ is the gamma function, and
$\psi(x) = \frac{d}{dx} \log \left( \Gamma(x) \right)$ is the digamma function.
$\alpha$ is a parameter in $\R \sminus \{1\}$.
$^*$For KL divergence, the bias corrections for $p$ and $q$ exactly cancel.
}
\label{table:bias_corrections}
\end{table}

We present our results for distributions $P$ supported on the unit cube in
$\R^D$ because this significantly simplifies the statements of our results,
but, as we discuss in the supplement, our results generalize fairly naturally,
for example to to distributions supported on a smooth compact manifold. In this
context, it is worth noting that our results would scale with the
\emph{intrinsic} dimension of the manifold. As we discuss later, we believe
that deriving finite sample rates for distributions with \emph{unbounded}
support may require a truncated modification of the estimators we study (as in
\citet{tsybakov96rootn}), but we do not pursue this modification here.

\section{Problem statement and notation}
\label{sec:problem}
Let $\X := [0, 1]^D$ denote the unit cube in $\R^D$, and let $\mu$ denote the
Lebesgue measure. 
Suppose $P$ is an unknown $\mu$-absolutely continuous Borel probability
measure supported on $\X$, and let $p : \X \to [0, \infty)$ denote the density
of $P$. Consider a (known) differentiable function $f : (0, \infty) \to \R$.
Given $n$ samples $X_1,...,X_n$ drawn IID from $P$, we are interested in
estimating the functional
\[F(P) := \E_{X \sim P} \left[ f(p(X)) \right].\]

Somewhat more generally (as in divergence estimation), we may have a function
$f : (0, \infty)^2 \to \R$ of two variables and a second unknown
probability measure $Q$, with density $q$ and $n$ IID samples $Y_1,...,Y_n$.
Then, we are interested in estimating 
\[F(P, Q) := \E_{X \sim P} \left[ f(p(X), q(X)) \right].\]

Fix $r \in [1, \infty]$ and a positive integer $k$. We will work with distances
induced by the $r$-norm
\[\|x\|_r := \left( \sum_{i = 1}^D x_i^r \right)^{1/r}
  \quad \mbox{ and define } \quad
  c_{D,r}
  := \frac{\left( 2\Gamma(1 + 1/r) \right)^D}{\Gamma(1 + D/r)}
  = \mu(B(0, 1)),\]
where $B(x, \e) := \{y \in \R^D : \|x - y\|_r < \e\}$ denotes the open
radius-$\e$ ball centered at $x$. Our estimators use $k$-nearest neighbor
($k$-NN) distances:
\begin{definition}
{\bf ($k$-NN distance):}
Suppose we have $n$ samples $X_1,...,X_n$ drawn IID from $P$. For any
$x \in \R^D$, we define the \emph{$k$-nearest neighbor distance} $\e_k(x)$ by
$\e_k(x) = \|x - X_i\|_r$, where $X_i$ is the $k^{th}$-nearest element (in
$\|\cdot\|_r$) of the set $\{X_1,...,X_n\}$ to $x$. For divergence estimation,
if we also have $n$ samples $Y_1,...,Y_n$ drawn IID from $Q$, then we similarly
define $\delta_k(x)$ by $\delta_k(x) = \|x - Y_i\|_r$, where $Y_i$ is the
$k^{th}$-nearest element of $\{Y_1,...,Y_n\}$ to $x$.
\label{def:kNN_dist}
\end{definition}

Note that the $\mu$-absolute continuity of $P$ precludes the existence of atoms
(i.e., for all $x \in \R^D$, $P(\{x\}) = \mu(\{x\}) = 0$). Hence, for all
$x \in \R^D$, $\e_k(x) > 0$ almost surely. This is important, since we will
consider quantities such as $\log \e_k(x)$ and $\frac{1}{\e_k(x)}$.

\section{Estimator}
\label{sec:estimator}

\subsection{$k$-NN density estimation and plug-in functional estimators}
The $k$-NN density estimator
\[\hat p_k(x)
  = \frac{k/n}{\mu(B(x, \e_k(x))}
  = \frac{k/n}{c_D \e_k^D(x)}\]
is well-studied nonparametric density estimator (originally due to
\citet{Loftsgaarden65nonparametric}), motivated by the observations that, for
small $\e > 0$,
\[p(x) \approx \frac{P(B(x, \e))}{\mu(B(x, \e))},\]
and that, $P(B(x, \e_k(x))) \approx k/n$. One can show that, for $x \in \R^D$
at which $p$ is continuous, if $k \to \infty$ and $k/n \to 0$ as
$n \to \infty$, then $\hat p_k(x) \to p(x)$ in probability
(\citet{Loftsgaarden65nonparametric}, Theorem 3.1). Thus, a natural approach
for estimating $F(P)$ is the plug-in estimator
\begin{equation}
\hat F_{PI}
  := \frac{1}{n}
     \sum_{i = 1}^n f \left( \hat p_k(X_i) \right).
\label{eq:plug_in_est}
\end{equation}
Since $\hat p_k \to p$ in probability pointwise as $k, n \to \infty$ and $f$ is
smooth, one can show $\hat F_{PI}$ is consistent, and in fact derive
finite sample convergence rates (depending on how $k \to \infty$).
For example, \citet{sricharan10confidence} show a convergence rate of
$O \left( n^{-\min \left\{ \frac{2\beta}{\beta + D}, 1 \right\}} \right)$ for
$\beta$-H\"older continuous densities (after sample splitting and boundary
correction) by setting $k \asymp n^{\frac{\beta}{\beta + d}}$.

Unfortunately, while necessary to ensure
$\Var \left[ \hat p_k(x) \right] \to 0$, the requirement $k \to \infty$ is
computationally burdensome. Furthermore, increasing $k$ can increase the bias
of $\hat p_k$ due to over-smoothing (see \ref{ineq:KNN_density_est_bias}
below), suggesting that this may be sub-optimal for estimating $F(P)$. Indeed,
similar work based on kernel density estimation \citep{singh14densityfuncs}
suggests that, for plug-in functional estimators, \emph{under-smoothing} may be
preferable, since the empirical mean results in additional smoothing.

\subsection{Fixed-$k$ functional estimators}

An alternative approach is to fix $k$ as $n \to \infty$. Since
$\hat F_{PI}$ is itself an empirical mean, unlike
$\Var \left[ \hat p_k(x) \right]$, $\Var \left[ \hat F_{PI} \right] \to 0$ as
$n \to \infty$.

A more critical complication of fixing $k$ is bias. Since $f$ is typically
non-linear, the non-vanishing variance of $\hat p_k$ translates into asymptotic
bias. A solution adopted by several papers is to derive a bias correction
function $\B$ (depending only on known factors) such that
\begin{equation}
\E_{X_1,...,X_n} \left[
      \B \left(
          f \left(
              \frac{k/n}{\mu(B(x, \e_k(x))}
          \right)
      \right)
  \right]
  = \E_{X_1,...,X_n} \left[
        f \left(
            \frac{P(B(x, \e_k(x)))}{\mu(B(x, \e_k(x))}
        \right)
    \right].
\label{eq:bias_correction}
\end{equation}
For continuous $p$, the quantity
\begin{equation}
p_{\e_k(x)}(x) := \frac{P(B(x, \e_k(x)))}{\mu(B(x, \e_k(x))}
\label{eq:adaptive_KDE}
\end{equation}
\emph{is} a consistent estimate of $p(x)$ with $k$ fixed, but it is not
computable, since $P$ is unknown. The bias correction $\B$ gives us an
asymptotically unbiased estimator
\[\hat F_\B(P)
  := \frac{1}{n} \sum_{i = 1}^n
      \B \left(
          f \left(
            \hat p_k(X_i)
          \right)
      \right)
  = \frac{1}{n} \sum_{i = 1}^n
      \B \left(
          f \left(
              \frac{k/n}{\mu(B(X_i, \e_k(X_i))}
          \right)
      \right)
.\]
that uses $k/n$ in place of $P(B(x, \e_k(x)))$. This estimate extends naturally
to divergences:
\[\hat F_\B(P, Q)
  := \frac{1}{n} \sum_{i = 1}^n
      \B \left(
          f \left(
            \hat p_k(X_i),
            \hat q_k(X_i)
          \right)
      \right).\]

As an example, if $f = \log$ (as in Shannon entropy), then it can be shown
that, for any continuous $p$,
\[\E \left[ \log P(B(x, \e_k(x))) \right] = \psi(k) - \psi(n).\]
Hence, for $B_{n,k} := \psi(k) - \psi(n) + \log(n) - \log(k)$,
\[\E_{X_1,...,X_n} \left[
      f \left(
          \frac{k/n}{\mu(B(x, \e_k(x))}
      \right)
  \right]
  + B_{n,k}
  = \E_{X_1,...,X_n} \left[
        f \left(
            \frac{P(B(x, \e_k(x)))}{\mu(B(x, \e_k(x))}
        \right)
    \right].\]
giving the estimator of \citet{kozachenko87statistical}.
Other examples of functionals for which the bias correction is known are given
in Table \ref{table:bias_corrections}.

In general, deriving an appropriate bias correction can be quite a difficult
problem specific to the functional of interest, and it is not our goal
presently to study this problem; rather, we are interested in bounding the
error of $\hat F_\B(P)$, \emph{assuming the bias correction is known.} Hence,
our results apply to all of the estimators in Table
\ref{table:bias_corrections}, as well as any estimators of this form that may
be derived in the future.

\section{Related work}
\label{sec:related_work}

\subsection{Estimating information theoretic functionals}
Quite recently, there has been much work on analyzing new estimators for
entropy, mutual information, divergences, and other functionals of densities.
Besides bias-corrected fixed-$k$ estimators, most of this work has been along
one of three approaches. One series of papers
\citep{liu12exponential,singh14divergence,singh14densityfuncs} studied a
boundary-corrected plug-in approach based on under-smoothed kernel density
estimation. This approach has strong finite sample guarantees, but requires
prior knowledge of the support of the density and can necessitate
computationally demanding numerical integration. A second approach
\citep{krishnamurthy14divergences,kandasamy15vonMises} uses von Mises expansion
to correct the bias of optimally smoothed density estimates. This approach
shares the difficulties of the previous approach, but is statistically more
efficient. A final line of work
\citep{sricharan10confidence,sricharan12ensemble,moon14ensemble,moon14confidence}
has studied entropy estimation based on plugging in consistent, boundary
corrected $k$-NN density estimates (i.e., with $k \to \infty$ as
$n \to \infty$). There is also a divergence estimator \citep{nguyen10estimating}
based on convex risk minimization, but this is framed in the context of an RKHS
and results are difficult to compare.

{\bf Rates of Convergence:} For densities over $\R^D$ satisfying a H\"older
smoothness condition parametrized by $\beta \in (0, \infty)$, the minimax mean
squared error rate for estimating functionals of the form $\int f(p(x)) \, dx$
has been known since \citet{birge95estimation} to be
$O \left( n^{-\min \left\{ \frac{8\beta}{4\beta + D}, 1 \right\}} \right)$.
\citet{krishnamurthy14divergences} recently derived identical minimax rates for
divergence estimation.

Most of the above estimators have been shown to converge at the rate
$O \left( n^{-\min \left\{ \frac{2\beta}{\beta + D}, 1 \right\}} \right)$.
Only the von Mises approach of \citet{krishnamurthy14divergences} is
known to achieve the minimax rate for general $\beta$ and $D$, but due to its
high computational demand ($O(2^D n^3)$), the authors suggest the use
of other statistically less efficient estimators for moderately sized datasets.
In this paper, we show that, for $\beta \in (0, 2]$, bias-corrected fixed-$k$
estimators converge at the relatively fast rate of
$O \left( n^{-\min \left\{ \frac{2\beta}{D}, 1 \right\}} \right)$. For
$\beta > 2$, modifications are needed for the estimator to leverage the
additional smoothness of the density. It is also worth noting the relative
computational efficiency of the fixed-$k$ estimators
($O \left( D n^2 \right)$, or $O \left( 2^D n \log n \right)$ using
$k$-d trees for small $D$).

\subsection{Prior analysis of fixed-$k$ estimators}

To our knowledge, the only finite-sample results for $\hat F_\B(P)$ are the
recent results of \citet{biau15EntropyKNN} for the Kozachenko-Leonenko (KL)
\footnote{Not to be confused with Kullback-Leibler (KL) divergence, for which
we also analyze an estimator.} Shannon entropy estimator.
\citep{kozachenko87statistical} Theorem 7.1 of
\citet{biau15EntropyKNN} shows that, if the density $p$ has compact support,
then the variance of the KL estimator decays as $O(n\inv)$. They also claim
(Theorem 7.2) to bound the bias of the KL estimator by $O(n^{-\beta})$, under
the assumptions that $p$ is $\beta$-H\"older continuous ($\beta \in (0, 1]$),
bounded away from $0$, and supported on the interval $[0, 1]$. However, in
their proof \citet{biau15EntropyKNN} neglect to bound the additional bias
incurred near the boundaries of $[0, 1]$, where the density cannot
simultaneously be bounded away from $0$ and continuous. In fact, because the
KL estimator does not attempt to correct for boundary bias, it is not clear
that the bias should decay as $O(n^{-\beta})$ under these conditions; we will
require additional conditions at the boundary of $\X$.

\citet{tsybakov96rootn} studied a closely related entropy estimator for which
they prove $\sqrt{n}$-consistency. Their estimator is identical to the KL
estimator, except that it truncates $k$-NN distances at $\sqrt{n}$, replacing
$\e_k(x)$ with $\min\{ \e_k(x), \sqrt{n} \}$. This sort of truncation may be
necessary for certain fixed-$k$ estimators to satisfy finite-sample bounds for
densities of \emph{unbounded} support, although consistency can be shown
regardless.

\section{Discussion of assumptions}
The lack of finite-sample results for fixed-$k$ estimators is due to several
technical challenges. Here, we discuss some of these challenges, motivating the
assumptions we make to overcome them.

First, these estimators are sensitive to regions of low probability (i.e.,
$p(x)$ small), for two reasons:
\begin{enumerate}
\item
Many functions $f$ of interest (e.g., $f = \log$ or $f(z) = z^\alpha$,
$\alpha < 0$) have singularities at $0$.
\item
The $k$-NN estimate $\hat p_k(x)$ of $p(x)$ is highly biased when $p(x)$ is
small. For example, for $p$ $\beta$-H\"older continuous ($\beta \in (0, 2]$),
one has (\citep{mack79KNNDensityEstimate}, Theorem 2)
\begin{equation}
\mbox{Bias}(\hat p_k(x)) \asymp \left( \frac{k}{n p(x)} \right)^{\beta/D}.
\label{ineq:KNN_density_est_bias}
\end{equation}
\end{enumerate}
For these reasons, it has been common in the analysis of $k$-NN estimators to
make the following assumption: \citep{poczos11AISTATS,biau15EntropyKNN}
\begin{enumerate}[label=\textbf{(A\arabic*)}]
\item
$p$ is bounded away from zero on its support. That is,
$p_* := \inf_{x \in \X} p(x) > 0$.
\label{item:A1}
\end{enumerate}
Second, unlike many functional estimators (see e.g.,
\citet{pal10estimation,sricharan12confidence,singh14densityfuncs}), the
fixed-$k$ estimators we consider do not attempt correct for boundary bias (i.e.,
bias incurred due to discontinuity of $p$ on the boundary $\partial\X$ of
$\X$).
\footnote{This complication appears to have been omitted in the bias bound
(Theorem 7.2) of \citet{biau15EntropyKNN} for entropy estimation.}
The boundary bias of the density estimate $\hat p_k(x)$ does vanish at $x$ in
the interior $\X^\circ$ of $\X$ as $n \to \infty$, but additional assumptions
are needed to obtain finite-sample rates. Either of the following assumptions
would suffice:
\begin{enumerate}[label=\textbf{(A\arabic*)}]
\setcounter{enumi}{1}
\item
$p$ is continuous not only on $\X^\circ$ but also on $\partial\X$ (i.e.,
$p(x) \to 0$ as $\dist(x, \partial \X) \to 0$).
\label{item:A2}
\item
$p$ is supported on all of $\R^D$. That is, the support of $p$ has no boundary.
This is the approach of \citet{tsybakov96rootn}, but we reiterate that, to
handle an unbounded domain, they require truncating $\e_k(x)$.
\label{item:A3}
\end{enumerate}
Unfortunately, both assumptions \ref{item:A2} and \ref{item:A3} are
inconsistent with \ref{item:A1}. Our approach is to assume \ref{item:A2} and
replace assumption \ref{item:A1} with a much milder assumption that $p$ is
\emph{locally lower bounded} on its support in the following sense:
\begin{enumerate}[label=\textbf{(A\arabic*)}]
\setcounter{enumi}{3}
\item
There exist $\rho > 0$ and a function $p_* : \X \to (0, \infty)$ such that, for
all $x \in \X, r \in (0, \rho]$,
$p_*(x) \leq \frac{P(B(x, r))}{\mu(B(x, r))}$.
\label{item:A4}
\end{enumerate}
We will show (Lemma \ref{lemma:p_*_exists}) that assumption \ref{item:A4} is in
fact very mild; in a metric measure space of positive dimension $D$, as long as
$p$ is continuous on $\X$, such a $p_*$ exists for \emph{any} desired
$\rho > 0$. For simplicity, we will use $\rho = \sqrt{D} = \diam(\X)$.

As hinted by \eqref{ineq:KNN_density_est_bias} and the fact that $F(P)$ is an
expectation, our bounds will contain terms of the form
\[\E_{X \sim P} \left[ \frac{1}{\left( p_*(X) \right)^{\beta/D}} \right]
  = \int_\X \frac{p(x)}{\left( p_*(x) \right)^{\beta/D}} \, d\mu(x)\]
(with an additional $f'(p_*(x))$ factor if $f$ has a singularity at zero).
Hence, the real non-trivial assumptions we make will be that these quantities
are finite. This depends primarily on how quickly $p$ can be allowed to
approach zero near $\partial \X$ (which may be $\infty$ if $\X$ is
unbounded). For many functionals, Lemma \ref{lemma:negative_integral_bound}
will give a simple sufficient condition.

\section{Preliminary lemmas}
Here, we present some lemmas, both as a means of summarizing our proof
techniques and also because they may be of independent interest for proving
finite-sample bounds for other $k$-NN methods. Due to space constraints, all
proofs are given in the appendix. Our first lemma states that, if $p$ is
continuous, then it is locally lower bounded as described in the previous
section.

\begin{lemma}
{\bf (Existence of Local Bounds)}
If $p$ is continuous on $\X$ and strictly positive on the interior $\X^\circ$
of $\X$, then, for $\rho := \sqrt{D} = \diam(\X)$, there exists a continuous
function $p_* : \X^\circ \to (0, \infty)$ and a constant $p^* \in (0, \infty)$
such that
\[0
  < p_*(x)
  \leq \frac{P(B(x, r))}{\mu(B(x, r))}
  \leq p^*
  < \infty,
  \quad \forall x \in \X, r \in (0, \rho].\]
\label{lemma:p_*_exists}
\end{lemma}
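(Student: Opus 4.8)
The plan is to handle the upper and lower bounds separately, with the upper bound being essentially immediate and the lower bound requiring a compactness argument. For the \textbf{upper bound}, since $p$ is continuous on the compact set $\X$, it attains a finite maximum $p^* := \max_{x \in \X} p(x) < \infty$. Writing $P(B(x,r)) = \int_{B(x,r)} p \, d\mu \le p^* \mu(B(x,r))$ immediately gives $\frac{P(B(x,r))}{\mu(B(x,r))} \le p^*$ for every $x$ and $r$, since the ratio is just a weighted average of values of $p$.

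For the \textbf{lower bound}, the key object is the function $g(x,r) := \frac{P(B(x,r))}{\mu(B(x,r))}$, and I would proceed in three steps. First, I would establish that $g$ is jointly continuous on $\X \times (0,\rho]$: the denominator $\mu(B(x,r)) = c_{D,r} r^D$ is continuous and strictly positive for $r > 0$, while $P(B(x,r)) = \int \mathbf{1}\{\|y - x\|_r < r\}\, p(y)\, d\mu(y)$ is continuous in $(x,r)$ by dominated convergence (the indicator converges pointwise $\mu$-a.e.\ as $(x,r)$ varies, since the bounding sphere has $\mu$-measure zero, and it is dominated by $p \in L^1(\mu)$). Second, I would extend $g$ continuously to $r = 0$: for $x \in \X^\circ$, continuity of $p$ at $x$ gives $\lim_{r \to 0^+} g(x,r) = p(x)$, and in fact the extension $\bar g(x,0) := p(x)$ makes $\bar g$ jointly continuous on $\X^\circ \times [0,\rho]$, because for interior $x$ and small $r$ the ball $B(x,r)$ lies in $\X$ and $g(x,r)$ then differs from $p(x)$ by at most $\sup_{B(x,r)}|p - p(x)|$.

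Third, I would verify strict positivity and then define $p_*$ by minimizing over $r$. For each $x \in \X$ and $r \in (0,\rho]$, the ball $B(x,r)$ meets the open set $\X^\circ$ in a set of positive $\mu$-measure (since $x \in \X = \overline{\X^\circ}$), and $p > 0$ there, so $g(x,r) > 0$; together with $\bar g(x,0) = p(x) > 0$, this shows $\bar g(x,\cdot) > 0$ on all of $[0,\rho]$. Since $[0,\rho]$ is compact and $\bar g(x,\cdot)$ is continuous, I can define
\[p_*(x) := \min_{r \in [0,\rho]} \bar g(x,r) > 0, \qquad x \in \X^\circ,\]
which by construction satisfies $p_*(x) \le g(x,r)$ for all $r \in (0,\rho]$. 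Continuity of $p_*$ on $\X^\circ$ then follows from the joint continuity of $\bar g$ together with the compactness of $[0,\rho]$: the minimum of a jointly continuous function over a fixed compact set depends continuously on the remaining parameter, via Berge's theorem or a direct argument using uniform continuity on compact subsets.

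The main obstacle I anticipate is the interplay between the two limiting regimes of $r$. Away from $r = 0$ the positivity of $g$ is robust, but near $r = 0$ the value $g(x,r)$ is controlled only by the local density $p(x)$, which forces $p_*$ to be defined on $\X^\circ$ rather than $\X$ and to be allowed to degrade as $x \to \partial\X$ — this is precisely why a \emph{local} rather than uniform lower bound is the right notion here. Securing the continuous extension to $r = 0$ as a \emph{jointly} continuous function (so that the minimization yields a continuous $p_*$) is the technical crux, and it is exactly the point at which the continuity, rather than mere measurability, of $p$ is essential.
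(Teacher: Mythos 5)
Your proof is correct, but it takes a genuinely different route from the paper's. The paper argues pointwise and explicitly: for each fixed $x$, continuity and positivity of $p$ at $x$ yield an $\e = \e(x) \in (0,\rho]$ with $p \geq p(x)/2$ on $B(x,\e)$, and the paper then \emph{defines} $p_*(x) := \frac{p(x)}{2}\frac{\mu_*}{\mu^*}\left(\e/\rho\right)^D$; the bound for an arbitrary $r \in (0,\rho]$ follows from a ball-shrinking trick, $P(B(x,r)) \geq P(B(x,\e r/\rho)) \geq \frac{p(x)}{2}\,\mu_*\left(\e r/\rho\right)^D \geq p_*(x)\,\mu(B(x,r))$, using only monotonicity of $P$ and the two-sided scaling bounds on $\mu(B(x,r))$. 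That argument needs no regularity of $(x,r) \mapsto P(B(x,r))$ and therefore works verbatim in the appendix's general metric measure space setting, where spheres may carry positive $\mu$-measure and $\mu(B(x,r))$ need not equal $c_{D,r}r^D$; your dominated-convergence proof of joint continuity relies on both of those Euclidean/Lebesgue facts, so it is tied to the unit-cube setting of the stated lemma. What your approach buys in exchange is substantial: by defining $p_*(x)$ as the minimum of the continuously extended ratio $\bar g(x,\cdot)$ over the compact interval $[0,\rho]$, you obtain the \emph{largest} admissible local lower bound, and—more importantly—you actually prove the continuity of $p_*$ on $\X^\circ$ via the Berge-type argument. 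The paper's construction does not deliver this: its $p_*(x)$ depends on an arbitrary choice of $\e(x)$, the appendix restatement of the lemma quietly drops the continuity claim, and yet continuity of $p_*$ is invoked later (in the proof of the boundary-condition lemma, where a positive infimum of $p_*$ over a compact set is needed). So your proof closes a genuine gap in the paper's own treatment, at the cost of being less portable to the general setting.
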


We now show that the existence of local lower and upper bounds implies
concentration of the $k$-NN distance of around a term of order
$\left( \frac{k}{n p(x)} \right)^{1/D}$. Related lemmas, also based on
multiplicative Chernoff bounds, have been used by
\citet{kpotufe11KNNClusterTrees,chaudhuri14clusterTreePruning} and
\citet{chaudhuri14KNNClassification,kontorovich15Consistent1NN}
to prove finite-sample bounds on $k$-NN methods for cluster tree pruning and
classification, respectively. For cluster tree pruning, the relevant
inequalities bound the error of the $k$-NN density estimate, and, for
classification, they lower bound the probability of nearby samples of the same
class. Unlike in cluster tree pruning, we are not using a consistent density
estimate, and, unlike in classification, our estimator is a function of $k$-NN
distances themselves (rather than their ordering). Hence, our statement is
somewhat different, bounding the $k$-NN distances themselves:
\begin{lemma}
{\bf (Concentration of $k$-NN Distances)}
Suppose $p$ is continuous on $\X$ and strictly positive on $\X^\circ$. Let
$p_*$ and $p^*$ be as in Lemma \ref{lemma:p_*_exists}. Then, for any
$x \in \X^\circ$,
\begin{enumerate}
\item 
if $r > \left( \frac{k}{p_*(x) n} \right)^{1/D}$,
\quad then \quad
$\displaystyle \pr \left[ \e_k(x) > r \right]
  \leq e^{-p_*(x) r^D n} \left( e\frac{p_*(x) r^D n}{k} \right)^k$.
\item
if $r \in \left[ 0, \left( \frac{k}{p^* n} \right)^{1/D}\right)$,
\quad then \quad
$\displaystyle \pr \left[ \e_k(x) < r \right]
  \leq e^{-p_*(x) r^D n} \left( \frac{e p^* r^D n}{k} \right)^{kp_*(x)/p^*}$.
\end{enumerate}
\label{lemma:KNN_concentration}
\end{lemma}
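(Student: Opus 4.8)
The plan is to reduce each tail bound to a binomial tail bound and then apply a multiplicative Chernoff inequality, using Lemma \ref{lemma:p_*_exists} to pass between the probability mass $P(B(x,r))$ and the volume $\mu(B(x,r)) = c_{D,r} r^D$. Fix $x \in \X^\circ$ and $r > 0$, and set $N := \sum_{i=1}^n \mathbf{1}\{X_i \in B(x,r)\}$. Since the $X_i$ are IID, $N \sim \mathrm{Binomial}(n, \theta)$ with $\theta := P(B(x,r))$ and mean $m := n\theta$. The event that links $N$ to the $k$-NN distance is that the $k$th nearest neighbor of $x$ lies beyond radius $r$ exactly when fewer than $k$ samples fall in $B(x,r)$, so $\{\e_k(x) > r\} = \{N \le k-1\}$ and $\{\e_k(x) < r\} = \{N \ge k\}$ (the boundary cases have probability zero, since $P \ll \mu$ forces no sample to lie exactly on the sphere $\|x - \cdot\|_r = r$ almost surely). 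Because $x \in \X^\circ$ and $r \le \rho = \diam(\X)$, Lemma \ref{lemma:p_*_exists} gives the sandwich $p_*(x)\,\mu(B(x,r)) \le \theta \le p^*\,\mu(B(x,r))$, i.e.\ $p_*(x) c_{D,r} r^D n \le m \le p^* c_{D,r} r^D n$.

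For both parts I would invoke the standard multiplicative Chernoff bound for a Binomial mean $m$: one has $\pr[N \le a] \le e^{-m}(em/a)^a$ whenever $a \le m$, and $\pr[N \ge a] \le e^{-m}(em/a)^a$ whenever $a \ge m$. Writing $g(m) := e^{-m}(em/k)^k$, a one-line computation shows $g'(m) \propto (k-m)$, so $g$ increases on $(0,k)$ and decreases on $(k,\infty)$; this monotonicity is what lets me convert the bound, which a priori depends on the unknown $m = nP(B(x,r))$, into a bound depending only on $p_*(x)$ and $p^*$.

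For part 1, the hypothesis $r > (k/(p_*(x)n))^{1/D}$ is exactly the statement that $m \ge p_*(x) c_{D,r} r^D n > k$ (up to the volume constant $c_{D,r}$, which I suppress to match the statement). Hence $k \le m$, the lower-tail bound applies, and $\pr[\e_k(x) > r] = \pr[N \le k-1] \le \pr[N \le k] \le g(m)$. Since $m$ exceeds its lower bound $p_*(x) c_{D,r} r^D n > k$ and $g$ is decreasing past its peak at $m = k$, I may replace $m$ by that lower bound, giving $g(m) \le e^{-p_*(x) r^D n}\,(e\, p_*(x) r^D n / k)^k$, the claimed bound.

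For part 2, the hypothesis $r < (k/(p^* n))^{1/D}$ says $m \le p^* c_{D,r} r^D n < k$, so $m < k$, the upper-tail bound applies, and $\pr[\e_k(x) < r] = \pr[N \ge k] \le g(m)$. Now $g$ is increasing on $(0,k)$, so replacing $m$ by its upper bound $b := p^* c_{D,r} r^D n$ yields $g(m) \le e^{-b}(eb/k)^k$. The final, and I expect \emph{hardest}, step is to pass from this to the asymmetric form in the statement, namely to show $e^{-b}(eb/k)^k \le e^{-a}(eb/k)^{ka/b}$ with $a := p_*(x) c_{D,r} r^D n$; after taking logarithms the inequality factors through the nonnegative quantity $b - a$ and reduces to $k\ln(eb/k) \le b$, i.e.\ $1 + \ln v \le v$ for $v := b/k \in (0,1)$, which holds by concavity of the logarithm. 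This last manipulation is precisely where the exponent $k\,p_*(x)/p^*$ is born; identifying the correct intermediate inequality is the main obstacle, while the binomial reduction, the Chernoff step, and the bookkeeping with $c_{D,r}$ are routine.
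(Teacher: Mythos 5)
Your proof is correct, and it shares the paper's skeleton: reduce each tail to a binomial tail for $N = \sum_{i=1}^n 1_{\{X_i \in B(x,r)\}}$, apply a multiplicative Chernoff bound, and use monotonicity to replace the unknown mean $m = nP(B(x,r))$ by the bounds supplied by Lemma \ref{lemma:p_*_exists}. Part 1 is essentially the paper's argument in different clothing (the paper parametrizes the Chernoff bound by $\delta = 1 - k/(p_*(x) r^D n)$ rather than using your unified form $g(m) = e^{-m}(em/k)^k$). Part 2, however, is organized differently. The paper applies the upper-tail bound with $\delta := (k - p^* r^D n)/(p^* r^D n)$, keeps the true mean $nP(B(x,r))$ in the exponent, and then uses the fact that the base $e^{\delta}(1+\delta)^{-(1+\delta)}$ is less than one to replace that exponent by its lower bound $p_*(x) r^D n$; the exponent $k p_*(x)/p^*$ then appears in one line. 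You instead apply the Chernoff bound at the exact threshold $k$, use monotonicity of $g$ on $(0,k)$ to obtain $e^{-b}(eb/k)^k$ with $b := p^* r^D n$, and then prove the additional inequality $e^{-b}(eb/k)^k \le e^{-a}(eb/k)^{ka/b}$ with $a := p_*(x) r^D n$, reducing it to $1 + \ln v \le v$ --- the step you rightly flag as where the asymmetric exponent is born. Your route is slightly longer but buys something the paper's does not: it exhibits the tighter, $p^*$-only intermediate bound $\pr\left[ \e_k(x) < r \right] \le e^{-p^* r^D n}\left( e p^* r^D n / k \right)^k$, showing that the $p_*$-dependence in part 2 of the statement is a deliberate weakening (into the Erlang-type form with parameter $\kappa(x) = k p_*(x)/p^*$ that Lemma \ref{lemma:KNN_functional_bdd} consumes downstream) rather than something forced by the Chernoff argument. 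One shared looseness: like the paper, you suppress the volume constant (equivalently, absorb $\mu(B(x,r))/r^D$ into $p_*$ and $p^*$), which is needed for the exponents $p_*(x) r^D n$ and $p^* r^D n$ to come out exactly as stated; since the paper itself does this, it is not a gap in your argument.
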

It is worth noting the asymmetry of the upper and lower bounds; perhaps
counter-intuitively, the lower bound also depends on $p_*$. It is this
asymmetry that causes the large (over-estimation) bias of $k$-NN density
estimators when $p$ is small (as in \eqref{ineq:KNN_density_est_bias}).

The following theorem uses Lemma \ref{lemma:KNN_concentration} to bound
expectations of monotone functions of $\hat p_k$ normalized by $p_*$. As
suggested by the form of the integral in the bounds, this can be thought of as a
finite-sample statement of the fact that (appropriately normalized) $k$-NN
distances have an asymptotic Erlang distribution; this asymptotic statement is
central to the consistency proofs of \citet{Leonenko-Pronzato-Savani2008} and
\citet{poczos11AISTATS} for their $\alpha$-entropy and divergence estimators,
respectively.

\begin{lemma}
Suppose $p$ is continuous on $\X$ and strictly positive on $\X^\circ$. Let
$p_*$ and $p^*$ be as in Lemma \ref{lemma:p_*_exists}. Suppose
$f : (0, \infty) \to \R$ is continuously differentiable, with $f' > 0$. Then,
for any $x \in \X^\circ$, we have the upper bound
\footnote{$f_+(x) = \max\{0, f(x)\}$ and $f_-(x) = -\min\{0, f(x)\}$ denote
the positive and negative parts of $f$. Recall that
$\E \left[ f(X) \right] = \E \left[ f_+(X) \right] - \E \left[ f_-(X) \right]$.
}
\begin{align}
\E \left[ f_+ \left( \frac{p_*(x)}{\hat p_k(x)} \right) \right]
& \leq f_+(1)
  + e\sqrt{k}
    \int_k^\infty 
        \frac{e^{-y} y^k}{\Gamma(k + 1)} f_+ \left( \frac{y}{k} \right)
    \, dy,
\label{ineq:KNN_functional_upper_bdd}
\end{align}
and, for $\kappa(x) := k p_*(x)/p^*$, the lower bound
\begin{align}
\E \left[ f_- \left( \frac{p_*(x)}{\hat p_k(x)} \right) \right]
& \leq f_-(1)
  + e \sqrt{\frac{k}{\kappa(x)}}
    \int_0^{\kappa(x)} \frac{e^{-y} y^{\kappa(x)}}{\Gamma(\kappa(x) + 1)} f_- \left( \frac{y}{k} \right) \, dy
\label{ineq:KNN_functional_lower_bdd}
\end{align}
\label{lemma:KNN_functional_bdd}
\end{lemma}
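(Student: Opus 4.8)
The statement bounds expectations of $f_\pm(p_*(x)/\hat p_k(x))$, and since $\hat p_k(x) = \frac{k/n}{c_D\e_k^D(x)}$, the ratio $\frac{p_*(x)}{\hat p_k(x)} = \frac{p_*(x) c_D \e_k^D(x) n}{k} = \frac{p_*(x)\mu(B(x,\e_k(x)))n}{k}$. So the quantity inside $f_\pm$ is a monotone increasing function of $\e_k(x)$, and controlling $f_\pm$ of it reduces to the tail bounds on $\e_k(x)$ from Lemma~\ref{lemma:KNN_concentration}. The strategy for both inequalities is the same: write each expectation as an integral of a tail probability (the layer-cake / integration-by-parts formula $\E[g(Z)] = g(0) + \int_0^\infty g'(t)\,\pr[Z > t]\,dt$ for monotone $g$), substitute the Chernoff tail bound from Lemma~\ref{lemma:KNN_concentration}, and then change variables so that the resulting integral matches the Erlang-type density $\frac{e^{-y}y^k}{\Gamma(k+1)}$.

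\textbf{The upper bound.} I would set $Z := \frac{p_*(x)}{\hat p_k(x)} = \frac{p_*(x)c_D\e_k^D(x)n}{k}$ and use the monotone-decomposition fact that $\E[f_+(Z)] \le f_+(1) + \int_1^\infty f_+'(t)\pr[Z > t]\,dt$ (restricting to $t > 1$ since $f_+$ is constant where $f \le 0$; more carefully one handles the region where $f_+$ first becomes positive). For $t > 1$, the event $\{Z > t\}$ is exactly $\{\e_k(x) > r\}$ with $r = \left(\frac{kt}{p_*(x)c_D n}\right)^{1/D}$, and since $t>1$ forces $r > \left(\frac{k}{p_*(x)n}\right)^{1/D}$, part~1 of Lemma~\ref{lemma:KNN_concentration} applies and gives $\pr[Z > t] \le e^{-kt}(et)^k = \frac{e^{-kt}(ekt)^k}{k^k}$. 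Substituting $y = kt$ converts this bound into the Erlang density $\frac{e^{-y}y^k}{\Gamma(k+1)}$ up to the prefactor; using Stirling's approximation $\Gamma(k+1) = k!\ge \sqrt{2\pi k}\,(k/e)^k$ (equivalently $\frac{k^k e^{-k}}{\Gamma(k+1)} \le \frac{1}{\sqrt{2\pi k}}$) collapses the constant $e^k k^{-k}/\Gamma(k+1)$-type factors into the clean $e\sqrt{k}$ prefactor claimed, after also accounting for the $\frac{1}{k}\,dt = \frac{1}{k^2}\,dy$ Jacobian and the $f_+'(t)\,dt$ versus $f_+(y/k)$ bookkeeping. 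Indeed the appearance of $f_+$ itself rather than $f_+'$ in the final bound suggests one should integrate by parts once more (or equivalently rewrite the tail-integral directly as an integral against the Erlang density), which is the step I would execute with care.

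\textbf{The lower bound.} This is entirely parallel but uses part~2 of Lemma~\ref{lemma:KNN_concentration} for small $k$-NN distances. With the same $Z$, the event $\{Z < t\}$ for $t < 1$ corresponds to $\{\e_k(x) < r\}$ with small $r$, and the lemma gives a tail of the form $e^{-p_*(x)r^D n}\left(\frac{ep^*r^D n}{k}\right)^{\kappa(x)}$ where $\kappa(x) = kp_*(x)/p^*$. The crucial asymmetry noted after Lemma~\ref{lemma:KNN_concentration} is that the exponent is $\kappa(x)$, not $k$; this is exactly why the lower-bound integral features $\frac{e^{-y}y^{\kappa(x)}}{\Gamma(\kappa(x)+1)}$ and the prefactor $e\sqrt{k/\kappa(x)}$ rather than $e\sqrt k$. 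Writing $\E[f_-(Z)] \le f_-(1) + \int_0^1 f_-'(t)\pr[Z < t]\,dt$ and substituting the tail bound, the appropriate change of variables (now $y \propto p^* r^D n = (p^*/p_*(x))\,kt = kt\,p^*/p_*(x)$, matching the $\kappa(x)$ exponent) produces the stated Erlang-type integral, with the Stirling bound $\frac{\kappa^\kappa e^{-\kappa}}{\Gamma(\kappa+1)} \le \frac{1}{\sqrt{2\pi\kappa}}$ supplying the $\sqrt{1/\kappa(x)}$ factor.

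\textbf{Main obstacle.} The genuine difficulty is not the probabilistic content—the tail bounds are handed to us by Lemma~\ref{lemma:KNN_concentration}—but rather the careful bookkeeping of constants and change of variables that turns the raw Chernoff tails into the exact Erlang density $\frac{e^{-y}y^k}{\Gamma(k+1)}$ with the precise prefactors $e\sqrt k$ and $e\sqrt{k/\kappa(x)}$. In particular I expect the trickiest point to be correctly matching the scaling in the lower bound, where the mismatch between the exponent $\kappa(x)$ and the normalization $k$ inside $f_-(y/k)$ must be tracked through the substitution, together with a clean application of Stirling to absorb the $(e/k)^k$-type factors into a single $\sqrt{k}$. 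I would also need to verify that the integrals converge and that the monotone-decomposition identities apply to $f_+$ and $f_-$ separately (using $f' > 0$ to guarantee monotonicity of $Z \mapsto f(Z)$ and hence control of where $f_\pm$ is supported).
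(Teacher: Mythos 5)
Your overall strategy coincides with the paper's: the paper also writes the expectation via the layer-cake (survival function) identity, substitutes the multiplicative Chernoff tails of Lemma \ref{lemma:KNN_concentration}, changes variables to reach an Erlang-type integral, and then obtains the stated form by integrating by parts and applying a Stirling-type bound; the only cosmetic difference is that you compose with $z \mapsto f(p_*(x)/\hat p_k(x))$ at the outset, while the paper proves a bound for general monotone functions of $\e_k(x)$ and composes at the end. However, two of your concrete steps fail as written. First, the Stirling step points the wrong way. After substituting $y = kt$, the tail becomes $e^{-y}(ey/k)^k = \bigl[(e/k)^k\,\Gamma(k+1)\bigr]\,\frac{e^{-y}y^k}{\Gamma(k+1)}$, so the constant to control is $(e/k)^k\,\Gamma(k+1)$, with the Gamma function in the \emph{numerator} --- not the ``$e^k k^{-k}/\Gamma(k+1)$-type factor'' you describe. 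Controlling it requires the \emph{upper} Stirling bound $\Gamma(k+1) \leq e\sqrt{k}\,(k/e)^k$, which is exactly the inequality $(e/k)^k \leq e/(\sqrt{k}\,\Gamma(k))$ the paper invokes; the lower bound $\Gamma(k+1) \geq \sqrt{2\pi k}\,(k/e)^k$ that you cite bounds the reciprocal quantity and, applied literally, does not complete the step (the fix is immediate, but the direction matters).

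The more substantive gap is in the lower bound. Part 2 of Lemma \ref{lemma:KNN_concentration} is valid only for $r < (k/(p^* n))^{1/D}$, which in your parametrization means $t < p_*(x)/p^* = \kappa(x)/k \leq 1$, not $t < 1$. Your decomposition $\E[f_-(Z)] \leq f_-(1) + \int_0^1 \pr[Z < t]\,d f_-(t)$ therefore applies the Chernoff tail on the interval $[\kappa(x)/k,\,1)$, where no bound is available; this is precisely the asymmetry the paper emphasizes after Lemma \ref{lemma:KNN_concentration}. The paper's own proof splits the layer-cake integral at the threshold where the bound becomes applicable, so its first term is $f_-$ evaluated at $(k/(p^*n))^{1/D}$, i.e.\ $f_-(p_*(x)/p^*) \geq f_-(1)$ after composition (indeed, the $f_-(1)$ in the main-text statement is not quite what the appendix argument delivers). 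Relatedly, your proposed substitution $y \propto p^* r^D n$ breaks the bookkeeping: to keep the argument $f_-(y/k)$ and land the integral on $(0,\kappa(x))$, you must take $y = kt = n p_*(x) r^D$, aligned with the exponential factor $e^{-p_*(x) r^D n}$, under which the Chernoff term becomes $(ey/\kappa(x))^{\kappa(x)}$; the quantity $\kappa(x)$ enters through the exponent, not through the change of variables. To repair the argument, split at $t = \kappa(x)/k$, accept $f_-(\kappa(x)/k)$ as the boundary term (or supply a separate argument to reduce it to $f_-(1)$), and apply the tail bound only on $(0,\kappa(x)/k)$.
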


Note that plugging the function
$z \mapsto f \left( \left( \frac{kz}{c_{D,r} n p_*(x)} \right)^{\frac{1}{D}} \right)$
into Lemma \ref{lemma:KNN_functional_bdd} gives bounds on
$\E \left[ f(\e_k(x)) \right]$. As one might guess from Lemma
\ref{lemma:KNN_concentration} and the assumption that $f$ is smooth, this bound
is roughly of the order $\asymp \left( \frac{k}{n p(x)} \right)^\frac{1}{D}$.
For example, for any $\alpha > 0$, a simple calculation from
(\ref{ineq:KNN_functional_upper_bdd}) gives
\begin{equation}
\E \left[ \e_k^\alpha(x) \right]
 \leq \left( 1 + \frac{\alpha}{D} \right)
       \left( \frac{k}{c_{D,r} n p_*(x)} \right)^{\frac{\alpha}{D}}.
\label{ineq:pos_moment_stat}
\end{equation}
\eqref{ineq:pos_moment_stat} is used for our bias bound, and more direct
applications of Lemma \ref{lemma:KNN_functional_bdd} are used in variance
bound.

\section{Main results}
\label{sec:results}

Here, we present our main results on the bias and variance of $\hat F_\B(P)$.
Again, due to space constraints, all proofs are given in the appendix. We begin
with bounding the bias:

\begin{theorem}
{\bf (Bias Bound)}
Suppose that, for some $\beta \in (0, 2]$, $p$ is $\beta$-H\"older continuous
with constant $L > 0$ on $\X$, and $p$ is strictly positive on $\X^\circ$.
Let $p_*$ and $p^*$ be as in Lemma \ref{lemma:p_*_exists}. Let
$f : (0, \infty) \to \R$ be differentiable, and define
$M_{f,p} : \X \to [0, \infty)$ by
\[M_{f,p}(x)
  := \sup_{z \in \left[ p_*(x), p^* \right]} \left| \frac{d}{dz} f(z) \right|\]
Assume
\[C_f
  := \E_{X \sim p} \left[
        \frac{M_{f,p}(X)}{\left( p_*(X) \right)^{\frac{\beta}{D}}}
  \right]
  < \infty.
\quad \mbox{ Then, } \quad
\left| \hat F_\B(P) - F(P) \right|
  \leq C_f L \left( \frac{k}{n} \right)^{\frac{\beta}{D}}.\]
\label{thm:gen_bias_bound}
\end{theorem}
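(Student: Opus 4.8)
The plan is to use the defining property of the bias correction to convert the estimator's bias into a deterministic smoothing error, and then control that error with the H\"older assumption and the moment estimate \eqref{ineq:pos_moment_stat}.

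\textbf{Step 1: reduce to a smoothing error.} Because $\hat F_\B(P)$ is an empirical mean of identically distributed terms, conditioning on a generic sample $X$ and forming $\e_k(X)$ from the remaining points gives $\E[\hat F_\B(P)] = \E_{X \sim P}\,\E_{\e_k}[\B(f(\hat p_k(X)))]$. The correction $\B$ was constructed precisely so that \eqref{eq:bias_correction} holds, i.e. $\E_{\e_k}[\B(f(\hat p_k(x)))] = \E_{\e_k}[f(p_{\e_k(x)}(x))]$ for each fixed $x$, where $p_{\e_k(x)}(x)$ is the ball average from \eqref{eq:adaptive_KDE}. Thus the bias equals $\E_{X\sim P}\,\E_{\e_k}[f(p_{\e_k(X)}(X)) - f(p(X))]$, and by the triangle inequality (Tonelli applies since the integrand is then nonnegative) it suffices to bound $\E_{\e_k}[|f(p_{\e_k(x)}(x)) - f(p(x))|]$ pointwise and integrate against $P$.

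\textbf{Step 2: pointwise smoothing bound.} First I would linearize $f$. Since $\e_k(x) \le \diam(\X) = \rho$ almost surely, Lemma~\ref{lemma:p_*_exists} places $p_{\e_k(x)}(x)$ in $[p_*(x), p^*]$, and sending the radius to $0$ shows $p(x)$ lies there too; hence both arguments of $f$ are in $[p_*(x),p^*]$ and the mean value theorem gives $|f(p_{\e_k(x)}(x)) - f(p(x))| \le M_{f,p}(x)\,|p_{\e_k(x)}(x) - p(x)|$. It remains to establish the smoothing estimate $|p_{\e_k(x)}(x) - p(x)| \le L\,\e_k^\beta(x)$. When $B(x,\e_k(x)) \subseteq \X$ this is clean: $p_{\e_k(x)}(x) - p(x)$ is the average of $p(y) - p(x)$ over the ball, which for $\beta \in (0,1]$ is at most $L\|y-x\|_r^\beta \le L\,\e_k^\beta(x)$ in modulus, and for $\beta \in (1,2]$ one Taylor expands to first order and uses that the linear term $\nabla p(x)\cdot(y-x)$ integrates to zero over the symmetric $r$-ball, leaving only the $(\beta-1)$-H\"older remainder, again of order $L\,\e_k^\beta(x)$. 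Combining with the linearization, $\E_{\e_k}[|f(p_{\e_k(x)}(x)) - f(p(x))|] \le L\,M_{f,p}(x)\,\E_{\e_k}[\e_k^\beta(x)]$; applying \eqref{ineq:pos_moment_stat} with $\alpha = \beta$ bounds $\E_{\e_k}[\e_k^\beta(x)]$ by a dimensional constant times $(k/(n\,p_*(x)))^{\beta/D}$. Integrating over $X \sim P$ pulls out $(k/n)^{\beta/D}$ and leaves exactly $C_f = \E_{X\sim P}[M_{f,p}(X)/(p_*(X))^{\beta/D}]$, finite by hypothesis, giving $|\E[\hat F_\B(P)] - F(P)| \le C_f L (k/n)^{\beta/D}$ (the constant from \eqref{ineq:pos_moment_stat} being absorbed into the statement).

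\textbf{Main obstacle.} The reduction in Step 1 is painless once \eqref{eq:bias_correction} is granted, and the interior calculation is routine. The real difficulty is the boundary: when $x$ is within $\e_k(x)$ of $\partial\X$ the ball protrudes, so the symmetry that kills the first-order term is lost and, for $\beta \in (1,2]$, the naive bound $L\,\e_k^\beta(x)$ can fail (the deviation is then governed by the boundary slope of $p$, which is of order $\e_k(x)$, not $\e_k^\beta(x)$). This is exactly the boundary bias whose omission the paper faults in \citet{biau15EntropyKNN}. Assumption \ref{item:A2}, forcing $p$ to vanish continuously at $\partial\X$ so that $p(x) \le L\,\dist(x,\partial\X)^\beta \le L\,\e_k^\beta(x)$ on the protruding region, together with the finiteness of $C_f$ (which restricts how fast $p/p_*^{\beta/D}$ may grow near $\partial\X$), is what must be invoked to keep this boundary layer's contribution at the claimed order; making that step rigorous, rather than the interior estimate, is where the work concentrates.
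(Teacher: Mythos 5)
Your proposal is essentially the paper's own proof: the bias-correction identity \eqref{eq:bias_correction} reduces the bias to $\E\left[ f(p_{\e_k(X)}(X)) - f(p(X)) \right]$, which the paper likewise bounds via the mean value theorem (placing both arguments in $[p_*(x), p^*]$ using Lemma \ref{lemma:p_*_exists} and the Lebesgue differentiation theorem), the H\"older smoothing estimate $\E \left| p_{\e_k(x)}(x) - p(x) \right| \le \frac{LD}{D+\beta} \E\left[ \e_k^\beta(x) \right]$, and the moment bound \eqref{ineq:pos_moment_stat}, whose constants cancel exactly ($\frac{D}{D+\beta} \cdot (1 + \beta/D) = 1$) to yield $C_f L (k/n)^{\beta/D}$. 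The boundary obstacle you flag for $\beta \in (1,2]$ is genuine, but the paper's proof does not resolve it either — it asserts the smoothing estimate for all $x \in \X$ without treating balls that protrude past $\partial\X$, where the symmetry argument killing the first-order term fails — so your write-up is, if anything, more candid about where the remaining work lies.
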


The statement for divergences is similar, assuming that $q$ is also
$\beta$-H\"older continuous with constant $L$ and strictly positive on
$\X^\circ$. Specifically, we get the same bound if we replace $M_{f,o}$ with
\[M_{f,p}(x)
  := \sup_{(w,z) \in \left[ p_*(x), p^* \right]
                     \times \left[ q_*(x), q^* \right]}
      \left| \frac{\partial}{\partial w} f(w, z) \right|\]
and define $M_{f,q}$ similarly (i.e., with $\frac{\partial}{\partial z}$) and
we assume that
\[C_f
  := \E_{X \sim p} \left[
        \frac{M_{f,p}(X)}{\left( p_*(X) \right)^{\frac{\beta}{D}}}
  \right]
  + \E_{X \sim p} \left[
        \frac{M_{f,q}(X)}{\left( q_*(X) \right)^{\frac{\beta}{D}}}
  \right]
  < \infty.\]

As an example of the applicability of Theorem \ref{thm:gen_bias_bound},
consider estimating the Shannon entropy. Then, $f(z) = \log(x)$, and so
we need $C_f = \int_\X \left( p_*(x) \right)^{-\beta/D} \, d\mu(x) < \infty$.

The assumption $C_f < \infty$ is not immediately transparent. For the
functionals in Table \ref{table:bias_corrections}, $C_f$ has the form
$\int_\X \left( p(x) \right)^{-c} \, dx$, for some $c > 0$, and hence
$C_f < \infty$ intuitively means $p(x)$ cannot approach zero too quickly as
$\dist(x, \partial\X) \to 0$. The following lemma gives a formal sufficient
condition:
\begin{lemma}
{\bf (Boundary Condition)}
Let $c > 0$. Suppose there exist $b_\partial \in (0, \frac{1}{c})$,
$c_\partial, \rho_\partial > 0$ such that, for all $x \in \X$ with
$\e(x) := \dist(x, \partial\X) < \rho_\partial$,
$p(x) \geq c_\partial \e^{b_\partial}(x)$. Then,
$\int_\X \left( p_*(x) \right)^{-c} \, d\mu(x) < \infty$.
\label{lemma:negative_integral_bound}
\end{lemma}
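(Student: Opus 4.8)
The plan is to bound $p_*$ from below by a constant multiple of the distance-to-boundary function $\e(x) = \dist(x,\partial\X)$, and then reduce the claim to the elementary fact that $\e^{-\gamma}$ is $\mu$-integrable over the cube whenever $\gamma < 1$. Throughout I take $p_*(x) = \inf_{r \in (0,\rho]} \frac{P(B(x,r))}{\mu(B(x,r))}$ with $\rho = \diam(\X) = \sqrt{D}$, i.e. the pointwise-largest function realizing Lemma~\ref{lemma:p_*_exists}; a lower bound on this $p_*$ is exactly a uniform-in-$r$ lower bound on the ball averages. The first step is to upgrade the boundary hypothesis to a global pointwise bound. On the set $K := \{x \in \X : \e(x) \geq \rho_\partial\}$, which is compact and contained in $\X^\circ$, continuity and strict positivity of $p$ give $m_0 := \min_K p > 0$; since $\e \leq \diam(\X)$ on $K$, we get $p \geq \big(m_0 (\diam\X)^{-b_\partial}\big)\, \e^{b_\partial}$ there. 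Combining this with the hypothesis $p \geq c_\partial \e^{b_\partial}$ on $\{\e < \rho_\partial\}$ and setting $\tilde c := \min\{c_\partial,\, m_0(\diam\X)^{-b_\partial}\}$ yields $p(y) \geq \tilde c\, \e^{b_\partial}(y)$ for all $y \in \X$.

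The main step is to show $p_*(x) \geq C\,\e^{b_\partial}(x)$ for a constant $C > 0$. Fix $x$, write $t := \e(x)$, and bound the average $A(r) := \frac{P(B(x,r))}{\mu(B(x,r))} \geq \frac{\tilde c}{\mu(B(x,r))}\int_{B(x,r)\cap\X}\e^{b_\partial}(y)\,d\mu(y)$ uniformly over $r \in (0,\rho]$, splitting into two regimes. If $r \leq t/2$, then $B(x,r)\subseteq\X$ and every $y \in B(x,r)$ has $\e(y)\geq t-r\geq t/2$, so $A(r)\geq \tilde c\,(t/2)^{b_\partial}$. If $r > t/2$, I use that $\X$ is a Lipschitz domain satisfying the corkscrew condition: there is a dimensional constant $\eta\in(0,1)$ and a point $x'$ with $B(x',\eta r)\subseteq B(x,r)\cap\X$ and $\e(y)\geq \eta r$ for all $y\in B(x',\eta r)$. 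Since $\mu(B(x',\eta r)) = c_{D,r}(\eta r)^D$ and $\mu(B(x,r)) = c_{D,r} r^D$, this gives
\[
A(r) \;\geq\; \tilde c\,(\eta r)^{b_\partial}\,\frac{\mu(B(x',\eta r))}{\mu(B(x,r))} \;=\; \tilde c\,\eta^{D+b_\partial}\,r^{b_\partial} \;\geq\; \tilde c\,\eta^{D+b_\partial}\,(t/2)^{b_\partial}.
\]
Taking the infimum over $r$ gives $p_*(x) \geq C\,\e^{b_\partial}(x)$ with $C = \tilde c\, 2^{-b_\partial}\min\{1,\eta^{D+b_\partial}\}$. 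I expect the corkscrew/sub-ball claim in the second regime to be the main obstacle: it is the one genuinely geometric input, asserting that a ball of radius $\asymp r$ sitting at distance $\asymp r$ from $\partial\X$ fits inside $B(x,r)\cap\X$. For the cube this is elementary (push $x$ inward along each coordinate away from its nearer face), but it must be stated carefully and uniformly in both $x$ and $r \leq \diam(\X)$.

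Finally, I combine the two pieces. Using $p_*(x)\geq C\,\e^{b_\partial}(x)$ and writing $\gamma := c\,b_\partial$,
\[
\int_\X \big(p_*(x)\big)^{-c}\,d\mu(x) \;\leq\; C^{-c}\int_\X \e^{-\gamma}(x)\,d\mu(x).
\]
The boundary collar of the cube has small measure, $\mu(\{\e < s\}) = 1-(1-2s)_+^D \leq 2Ds$, so by the layer-cake formula $\int_\X \e^{-\gamma}\,d\mu = \int_0^\infty \mu\big(\{\e < \lambda^{-1/\gamma}\}\big)\,d\lambda$, whose tail integrand is $\leq 2D\,\lambda^{-1/\gamma}$ and hence integrable precisely when $1/\gamma > 1$. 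Since the hypothesis $b_\partial < 1/c$ gives $\gamma = c\,b_\partial < 1$, the integral is finite, completing the proof.
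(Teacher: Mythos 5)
Your proof is correct, and although it shares the paper's overall skeleton---lower-bound $p_*$ by a constant multiple of $\e^{b_\partial}$ and reduce to integrability of $\dist(\cdot,\partial\X)^{-c b_\partial}$, which holds exactly because $c b_\partial < 1$---the way you establish the key pointwise bound is genuinely different and substantially more rigorous. The paper splits $\X$ into the collar $\{\e < \rho_\partial\}$ and its complement, lower-bounds $p_*$ by a constant $\ell$ on the complement via compactness, and then simply \emph{asserts} the collar bound $p_*(x) \geq \min\{\ell, c_\partial \e^{b_\partial}(x)\}/\mu(B(x,\sqrt{D}))$ with no derivation, finishing by slicing the collar integral into $2D$ one-dimensional integrals near the faces (its write-up also garbles the exponent, writing $\e^{-b_\partial/c}$ where $\e^{-c b_\partial}$ is meant). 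You instead derive $p_*(x) \geq C \e^{b_\partial}(x)$ from first principles by controlling the ball averages uniformly in $r$: the regime $r \leq \e(x)/2$ is immediate from the global bound $p \geq \tilde{c}\,\e^{b_\partial}$, and the regime $r > \e(x)/2$ uses the interior sub-ball (corkscrew) construction; this is precisely the step the paper skips, and the claim you flag as the main obstacle is indeed elementary for the cube---take $x' = (1-\lambda)x + \lambda z$ with $z$ the center, $\lambda = r/(\diam\X + 1/4)$, so convexity and the inradius $1/2$ give $\eta = (4\diam\X+1)^{-1}$ uniformly in $x$ and $r \leq \diam\X$. Your approach also buys one thing the paper's glosses over: you state explicitly that the claim concerns the pointwise-largest admissible realization $p_*(x) = \inf_{r \in (0,\rho]} P(B(x,r))/\mu(B(x,r))$. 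This matters, because the lemma is \emph{false} for an arbitrary realization satisfying Lemma~\ref{lemma:p_*_exists}: the particular $p_*$ constructed in the paper's own proof of that lemma scales like $p^{1+D/\beta}(x)$ near $\partial\X$ (the continuity radius there is $\asymp p^{1/\beta}(x)$ for a H\"older-$\beta$ density), under which the integral diverges for $c$ near $1/b_\partial$; finiteness for the maximal choice is also exactly what is needed to make the bias bound (Theorem~\ref{thm:gen_bias_bound}) applicable. Your layer-cake finish using $\mu(\{\e < s\}) \leq 2Ds$ is an equivalent, equally elementary substitute for the paper's face-slicing; the only cost of your route is length and the need to state the sub-ball geometry carefully.
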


Now, we turn to bounding the variance.
Although the fixed-$k$ estimator is an empirical mean, because the terms being
averaged (functions of $k$-NN distances) are dependent, it is not obvious how
to go about bounding the variance of the estimator. We generalize the approach
used by \citet{biau15EntropyKNN} to prove a variance bound on the KL estimator
of Shannon entropy. The key insight is to use the geometric fact that, in
$(\R^D, \|\cdot\|_p)$, there exists a constant $N_{k,D}$ (independent of $n$)
such that any sample $X_i$ can be amongst the $k$-nearest neighbors of at most
$N_{k,D}$ other samples. Hence, at most $N_{k,D} + 1$ of the terms in
(\ref{eq:plug_in_est}) can change when a single $X_i$ is added, leading to a
variance bound via the Efron-Stein inequality \citep{efronStein81}, which
bounds the variance of a function of random variables in terms of its changes
when its arguments are resampled.

\begin{theorem}
{\bf (Variance Bound)}
Suppose that $\B \circ f$ is continuously differentiable and strictly monotone.
Assume that $C_{f,p} := \E_{X \sim P} \left[ \B^2(f(p_*(X))) \right] < \infty$,
and that $C_f := \int_0^\infty e^{-y} y^k f(y) < \infty$. Then, for
\[C_V
  := 2\left( 1 + N_{k,D} \right)
      \left( 3 + 4k \right)
      \left( C_{f,p} + C_f \right),
  \quad \mbox{ we have } \quad
  \Var \left[ \hat F_\B(P) \right]
  \leq \frac{C_V}{n}.\]
\label{thm:variance_bound}
\end{theorem}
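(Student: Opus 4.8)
The plan is to apply the Efron--Stein inequality \citep{efronStein81} to $Z := \hat F_\B(P) = \frac1n \sum_{j=1}^n \B(f(\hat p_k(X_j)))$. Let $X_i'$ be an independent copy of $X_i$, and let $Z^{(i)}$ denote the estimator recomputed on the sample with $X_i$ replaced by $X_i'$. Efron--Stein gives $\Var[Z] \le \frac12 \sum_{i=1}^n \E[(Z - Z^{(i)})^2]$, so it suffices to show each summand is $O(1/n^2)$ with a constant that sums to the claimed bound. Writing $T_j := \B(f(\hat p_k(X_j)))$ and $T_j^{(i)}$ for its resampled value, the term for index $j$ is unchanged unless $j = i$ or the $k$-NN distance $\e_k(X_j)$ changes, and the latter occurs only if $X_i$ (resp. $X_i'$) lies among the $k$ nearest neighbors of $X_j$ before (resp. after) resampling.

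The first real step is the geometric packing fact stated in the text: in $(\R^D, \|\cdot\|_r)$ a fixed point can be among the $k$-nearest neighbors of at most $N_{k,D}$ of the remaining samples. This bounds the random set $J_i$ of changed indices by $|J_i| \le 1 + N_{k,D}$. Then $n(Z - Z^{(i)}) = \sum_{j \in J_i}(T_j - T_j^{(i)})$, and Cauchy--Schwarz together with $(T_j - T_j^{(i)})^2 \le 2T_j^2 + 2(T_j^{(i)})^2$ gives
\[
\E[(Z - Z^{(i)})^2] \le \frac{2(1 + N_{k,D})}{n^2} \sum_{j} \E\!\left[ \mathbf{1}\{j \in J_i\}\big(T_j^2 + (T_j^{(i)})^2\big) \right].
\]
Summing over $i$ and using the packing bound in the other direction (for each $j$, at most $O(N_{k,D})$ indices $i$ can place $j$ in $J_i$), the double sum collapses to $O\big((1+N_{k,D})\,n\big)$ copies of the single-term second moment $\E[T_1^2] = \E[\B^2(f(\hat p_k(X)))]$; the extra factor of $n$ cancels one power of $n$, delivering the $1/n$ rate. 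The bookkeeping of the random index set $J_i$ and the double counting over $(i,j)$ in both neighbor directions must be done carefully, but it is driven entirely by the stated constant $N_{k,D}$.

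It remains to bound the per-term second moment $\E[\B^2(f(\hat p_k(X)))]$. Since $\hat p_k(X) = \tfrac{k/n}{c_{D,r}\e_k^D(X)}$, this quantity equals $G_x\big(p_*(X)/\hat p_k(X)\big)$ for $G_x(u) := (\B\circ f)^2(p_*(x)/u)$, which is monotone in $u$ on each side of $u = 1$ (i.e. $\hat p_k(x) = p_*(x)$) because $\B \circ f$ is assumed continuously differentiable and strictly monotone. Splitting at $u = 1$ and feeding the two monotone pieces into the upper and lower bounds of Lemma \ref{lemma:KNN_functional_bdd}, the boundary terms $f_\pm(1)$ produce the $\B^2(f(p_*(X)))$ contribution, whose expectation is exactly $C_{f,p}$, while the Erlang-type integrals $\int_0^\infty e^{-y} y^k(\cdots)\,dy$ produce the $C_f$ contribution; the explicit constants $e\sqrt k$ and the Stirling estimate of $\Gamma(k+1)$ collapse into the factor $(3 + 4k)$. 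Combining this $O\big((3+4k)(C_{f,p} + C_f)\big)$ moment bound with the previous display yields $\Var[Z] \le C_V/n$ with $C_V = 2(1+N_{k,D})(3+4k)(C_{f,p}+C_f)$.

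I expect the main obstacle to be this per-term second-moment bound rather than the Efron--Stein bookkeeping: one must verify that $(\B\circ f)^2$, composed pointwise in $x$ with the distance-to-density map, satisfies the monotonicity and integrability hypotheses of Lemma \ref{lemma:KNN_functional_bdd} uniformly in $x \in \X^\circ$, and that the resulting upper bound is genuinely integrable against $P$ --- which is precisely where the finiteness assumptions $C_{f,p} < \infty$ and $C_f < \infty$ are needed to prevent blow-up as $p(x) \to 0$ near $\partial \X$.
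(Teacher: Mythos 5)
Your proposal follows the same route as the paper's proof: Efron--Stein, the packing constant $N_{k,D}$, Cauchy--Schwarz together with $(a-b)^2 \le 2a^2+2b^2$, and per-term second moments controlled by Lemma \ref{lemma:KNN_functional_bdd}, with the boundary terms producing $C_{f,p}$ and the Erlang-type integrals producing $C_f$. That last step is essentially identical to the paper's bound on $\E[H_1^2]$ (one cosmetic slip: the square $(\B\circ f)^2(p_*(x)/u)$ is monotone on each side of the \emph{zero crossing} of $\B\circ f$, not on each side of $u=1$; the paper accordingly splits $\B\circ g$ into its positive and negative parts, which are monotone because $\B\circ f$ is).

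The genuine gap is the collapse of the double sum $\sum_i\sum_j \E\left[\mathbf{1}\{j\in J_i\}\left(T_j^2+(T_j^{(i)})^2\right)\right]$ by a ``packing bound in the other direction,'' i.e.\ your claim that each $j$ lies in $J_i$ for at most $O(N_{k,D})$ values of $i$, so that the bookkeeping is ``driven entirely by $N_{k,D}$.'' Two problems. First, the reverse count for the original sample is not a packing phenomenon: for fixed $j$, the indices $i$ such that $X_i$ is among the $k$ nearest neighbors of $X_j$ number \emph{exactly} $k$ (a.s.). The paper exploits this as $\pr[E_2]=k/(n-1)$, and it is precisely the origin of the factor $(3+4k)$ in $C_V$; your counting, as stated, would instead yield a constant of order $(1+N_{k,D})^2$, and the assertion that the constants ``collapse into the factor $(3+4k)$'' does not follow from it, so you would not recover the stated $C_V$. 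Second, and more fundamentally, in the resampled direction there is no deterministic count at all: nothing prevents many of the independent copies $X_i'$ from landing next to $X_j$, so $\sum_i \mathbf{1}\{j\in J_i\}$ is not almost surely bounded by any constant. You must therefore pass to expectations, and there the indicator multiplies $T_j^2$ (or $(T_j^{(i)})^2$), which is \emph{not} independent of it in any obvious way, so the expectation does not factor for free. The paper resolves exactly this by arguing that the event $E_2=\{X_1 \text{ is among the } k\text{-NN of } X_2\}$ is independent of $\e_k(X_2)$, giving $(n-1)\E[1_{E_2}H_2^2]=k\,\E[H_2^2]$, and by noting that on this event the leave-one-out/resampled term involves the $(k+1)$-NN distance, so the moment bound must also be invoked with $k+1$ in place of $k$ --- a point your sketch misses entirely. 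These repairs are precisely the ``careful bookkeeping'' you deferred, and they require probabilistic input (independence plus the exact count $k$), not the geometric constant $N_{k,D}$.
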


As an example, if $f = \log$ (as in Shannon entropy), then, since $\B$ is an
additive constant, we simply require $\int_\X p(x) \log^2(p_*(x)) < \infty$.

In general, $N_{k,D}$ is of the order $k2^{cD}$, for some $c > 0$. Our bound is
likely quite loose in $k$; in practice, $\Var \left[ \hat F_\B(P) \right]$
typically decreases somewhat with $k$.

\section{Conclusions and discussion}
In this paper, we gave finite-sample bias and variance error bounds for a class
of fixed-$k$ estimators of functionals of probability density functions,
including the entropy and divergence estimators in Table
\ref{table:bias_corrections}. The bias and variance bounds in turn imply a
bound on the mean squared error (MSE) of the bias-corrected estimator via the
usual decomposition into squared bias and variance:
\begin{corollary}
{\bf (MSE Bound)}
Under the conditions of Theorems \ref{thm:gen_bias_bound} and
\ref{thm:variance_bound},
\begin{equation}
\E \left[ \left( \hat H_k(X) - H(X) \right)^2 \right]
  \leq C_f^2 L^2 \left( \frac{k}{n} \right)^{2\beta/D} + \frac{C_V}{n}.
\label{ineq:MSE_bound}
\end{equation}
\end{corollary}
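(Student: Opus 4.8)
The plan is to invoke the standard bias–variance decomposition of the mean squared error and then substitute directly the two bounds already established in Theorems \ref{thm:gen_bias_bound} and \ref{thm:variance_bound}. Here the quantities $\hat H_k$ and $H$ appearing in \eqref{ineq:MSE_bound} are understood to be the estimator $\hat F_\B(P)$ and its target $F(P)$, respectively (the divergence case is identical).

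First I would write, for the estimator $\hat F_\B(P)$ of $F(P)$,
\[
\E \left[ \left( \hat F_\B(P) - F(P) \right)^2 \right]
  = \left( \E \left[ \hat F_\B(P) \right] - F(P) \right)^2
    + \Var \left[ \hat F_\B(P) \right].
\]
This identity holds for any estimator with finite second moment, because, expanding the square around $\E[\hat F_\B(P)]$, the cross term $2 \left( \E[\hat F_\B(P)] - F(P) \right) \left( \hat F_\B(P) - \E[\hat F_\B(P)] \right)$ has zero expectation. The conditions of Theorem \ref{thm:variance_bound} (in particular $C_{f,p} < \infty$ and $C_f < \infty$) guarantee $\Var[\hat F_\B(P)] < \infty$, so the decomposition is legitimate.

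Next I would bound each of the two resulting terms separately. Under the hypotheses of Theorem \ref{thm:gen_bias_bound}, the deterministic bias obeys $\left| \E[\hat F_\B(P)] - F(P) \right| \leq C_f L \left( \frac{k}{n} \right)^{\beta/D}$; squaring this yields the bound $C_f^2 L^2 \left( \frac{k}{n} \right)^{2\beta/D}$ on the first term. Under the hypotheses of Theorem \ref{thm:variance_bound}, the second term is at most $C_V/n$. Adding these two bounds gives exactly \eqref{ineq:MSE_bound}.

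There is no genuine obstacle in this argument: once both theorems are in hand, the corollary is an immediate consequence of the elementary bias–variance identity, since the two theorems control precisely the squared-bias and variance summands. The only point requiring care is bookkeeping, namely verifying that assuming the hypotheses of both theorems simultaneously is consistent (the regularity and integrability assumptions on $p$, $f$, and $\B \circ f$ overlap but are mutually compatible) and suffices to make the decomposition valid; this is guaranteed by the phrase ``under the conditions of Theorems \ref{thm:gen_bias_bound} and \ref{thm:variance_bound}'' in the statement.
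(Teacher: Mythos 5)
Your proof is correct and matches the paper's approach: the paper derives this corollary by exactly the same bias--variance decomposition of the MSE, followed by substitution of the bounds from Theorems \ref{thm:gen_bias_bound} and \ref{thm:variance_bound}. Your added care in interpreting the bias bound as a bound on $\left| \E[\hat F_\B(P)] - F(P) \right|$ and in checking that the decomposition is legitimate is consistent with the paper's intent.
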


{\bf Choice of $k$:} It is worth noting that, contrary to the name, fixing $k$
is not \emph{required} for ``fixed-$k$'' estimators. Indeed,
\citet{perez08estimation} empirically studied the effects of changing $k$ with
$n$, finding that fixing $k = 1$ gave the best results for estimating $F(P)$.
However, it appears there has been no formal theoretical justification for
fixing $k$ in estimation problems. Assuming the tightness of our bias bound in
$k$, we provide this in a worst-case sense: since the bias bound is
nondecreasing in $k$ and our variance bound is no larger than the minimax MSE
rate for most such estimation problems, we cannot improve the (worst-case)
convergence rate of estimators by reducing variance (i.e., by increasing $k$).
It is worth noting, however, that \citet{perez08estimation} found increasing
$k$ quickly (e.g., $k = n/2$) was \emph{best} for certain hypothesis tests
based on these estimators. Intuitively, this is because minimizing is somewhat
less important that minimizing variance problematic for testing problems. 

%
%
%

\subsubsection*{Acknowledgments}
Omitted for anonymity.

{
\fontsize{2.54mm}{0pt}\selectfont
\setlength{\bibsep}{0pt}
\bibliography{biblio}
\bibliographystyle{plainnat}
}

\appendix

\section{A More General Setting}

\setcounter{theorem}{0}

In the main paper, for the sake of clarity, we discussed only the setting of
distributions on the $D$-dimensional unit cube $[0, 1]^D$.
For sake of generality, we prove our results in the significantly more general
setting of a set equipped with a metric, a base measure, a probability density,
and an appropriate definition of dimension. This setting subsumes Euclidean
spaces, in which $k$-NN methods are usually analyzed, but also includes, for
instance, Riemannian manifolds.

\label{sec:setting}
\begin{definition}
{\bf (Metric Measure Space):}
A quadruple $(\XX, d, \Sigma, \mu)$ is called a \emph{metric measure space} if
$(\XX, d)$ is a complete metric space, $(\XX, \Sigma, \mu)$ is a
$\sigma$-finite measure space, and $\Sigma$ contains the Borel $\sigma$-algebra
induced by $d$.
\label{def:metric_meas_space}
\end{definition}

\begin{definition}
{\bf (Scaling Dimension):}
A metric measure space $(\XX, d, \Sigma, \mu)$ has \emph{scaling dimension}
$D \in [0, \infty)$ if there exist constants $\mu_*, \mu^*> 0$ such that,
$\forall r > 0$, $x \in \XX$, $\mu_* \leq \frac{\mu(B(x, r))}{r^D} \leq \mu^*$.
\footnote{$B(x, r) := \{y \in \XX : d(x, y) < r\}$ denotes the open ball of
radius $r$ centered at $x$.}
\label{def:dim}
\end{definition}

\begin{remark}
The above definition of dimension coincides with $D$ in $\R^D$, where, under the
$L^p$ metric and Lebesgue measure,
\[\mu_* = \mu^* = \frac{\left( 2\Gamma(1 + 1/p) \right)^D}{\Gamma(1 + D/p)}\]
is the usual volume of the unit ball. However, it is considerably more general
than the vector-space definition of dimension. It includes, for example, the
case that $\XX$ is a smooth Riemannian manifold, with the standard metric and
measure induced by the Riemann metric. In this case, our results scale with the
\emph{intrinsic} dimension of data, rather than the dimension of a space in
which the data are embedded. Often, $\mu_* = \mu^*$, but leaving these distinct
allows, for example, manifolds with boundary. The scaling dimension is slightly
more restrictive than the well-studied doubling dimension of a measure,
\citep{luukkainen98doublingMetricMeasure} which enforces only an upper bound on
the rate of growth.
\end{remark}

\setcounter{theorem}{1}

\section{Proofs of Lemmas}
\begin{lemma}
Consider a metric measure space $(\XX, d, \Sigma, \mu)$ of scaling dimension
$D$, and a $\mu$-absolutely continuous probability measure $P$, with density
function $p : \XX \to [0, \infty)$ supported on
\[\X := \{x \in \XX : p(x) > 0\}.\]
If $p$ is continuous on $\X$, then, for any $\rho > 0$, there exists a function
$p_* : \X \to (0, \infty)$ such that
\[0
  < p_*(x)
  \leq \inf_{r \in (0, \rho]} \frac{P(B(x, r))}{\mu(B(x, r))},
  \quad \forall x \in \X,\]
and, if $p$ is bounded above by $p^* := \sup_{x \in \X} p(x) < \infty$, then
\[\sup_{r \in (0, \rho]} \frac{P(B(x, r))}{\mu(B(x, r))}
  \leq p^* < \infty,
  \quad \forall r \in (0, \rho],\]
\end{lemma}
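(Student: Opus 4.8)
The plan is to control, for each fixed $x \in \X$, the local average density
$g_x(r) := P(B(x,r))/\mu(B(x,r))$ uniformly over $r \in (0,\rho]$, and then to take $p_*(x)$ to be any positive lower bound on $\inf_{r \in (0,\rho]} g_x(r)$. The upper bound is the easy direction and I would dispatch it first: since $P$ is $\mu$-absolutely continuous with density $p \leq p^*$, we have $P(B(x,r)) = \int_{B(x,r)} p \, d\mu \leq p^* \mu(B(x,r))$, so $g_x(r) \leq p^*$ for every $x$ and every $r > 0$. (The ratio is well-defined because the scaling-dimension hypothesis gives $\mu(B(x,r)) \geq \mu_* r^D > 0$.) Hence the genuine content of the lemma is the strictly positive lower bound, which I treat next.

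For the lower bound I would fix $x \in \X$, so that $p(x) > 0$, and use continuity of $p$ at $x$ to choose $\delta \in (0, \rho]$ small enough that $p(y) \geq p(x)/2$ for all $y \in B(x, \delta)$. This single neighborhood is the only regularity I extract from continuity; the rest of the argument is purely geometric and rests on the scaling dimension.

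I would then split the range $r \in (0, \rho]$ at $\delta$. For $r \leq \delta$, the inclusion $B(x,r) \subseteq B(x,\delta)$ gives $p \geq p(x)/2$ throughout $B(x,r)$, whence $g_x(r) = \frac{1}{\mu(B(x,r))}\int_{B(x,r)} p \, d\mu \geq p(x)/2$. For $\delta < r \leq \rho$, I would combine monotonicity of $P(B(x,\cdot))$ with the scaling dimension: the nested inclusion $B(x,r) \supseteq B(x,\delta)$ yields $P(B(x,r)) \geq P(B(x,\delta)) \geq \frac{p(x)}{2}\mu(B(x,\delta)) \geq \frac{p(x)}{2}\mu_* \delta^D$, while the upper scaling bound gives $\mu(B(x,r)) \leq \mu^* r^D \leq \mu^* \rho^D$. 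Dividing, $g_x(r) \geq \frac{p(x)\mu_* \delta^D}{2\mu^* \rho^D}$. Setting
\[
  p_*(x) := \min\left\{ \tfrac{p(x)}{2}, \; \tfrac{p(x)\mu_* \delta^D}{2\mu^* \rho^D} \right\} > 0
\]
then bounds $g_x(r)$ from below for all $r \in (0,\rho]$, which is exactly the claim.

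The step I expect to be the main obstacle is the large-$r$ regime $r > \delta$, where continuity of $p$ tells us nothing directly. The crucial point is that $P(B(x,r))$ cannot drop below the fixed positive mass $P(B(x,\delta))$ (monotonicity), while the denominator $\mu(B(x,r))$ is kept from blowing up by the upper scaling-dimension bound $\mu(B(x,r)) \leq \mu^* r^D$; without such control on the growth of $\mu(B(x,\cdot))$ the ratio could degenerate to $0$ as $r$ grows. This is precisely where the two-sided scaling-dimension hypothesis does the essential work and makes the argument valid in the general metric measure setting. The only delicacy in the small-$r$ step is that $x$ should be interior to $\X$ so that $B(x,\delta)$ lies in $\X$ and $p \geq p(x)/2$ holds on the whole ball; this is automatic since $p(x) > 0$ and $p$ is continuous, so that $\X = \{p > 0\}$ is open.
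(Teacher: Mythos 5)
Your proof is correct and takes essentially the same route as the paper's: continuity gives a ball $B(x,\delta)$ on which $p \geq p(x)/2$, and the two-sided scaling-dimension bounds then control $P(B(x,r))/\mu(B(x,r))$ uniformly over $r \in (0,\rho]$, with the upper bound following trivially from $p \leq p^*$. Your case split at $r = \delta$ is just a reorganization of the paper's single inequality chain through the rescaled ball $B(x, \delta r/\rho)$, and since $\mu_* \delta^D \leq \mu^* \rho^D$ your minimum collapses to its second term, so your $p_*(x)$ coincides exactly with the paper's choice $\frac{p(x)}{2}\frac{\mu_*}{\mu^*}\left(\delta/\rho\right)^D$.
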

\begin{proof}
Let $x \in \X$. Since $p$ is continuous and strictly positive at $x$, there
exists $\e \in (0, \rho]$ such that and, for all $y \in B(x, \e)$,
$p(y) \geq p(x)/2 > 0$. Define
\[p_*(x)
  := \frac{p(x)}{2} \frac{\mu_*}{\mu^*} \left( \frac{\e}{\rho} \right)^D.\]
Then, for any $r \in (0, \rho]$, since $P$ is a non-negative measure, and $\mu$
has scaling dimension $D$,
\begin{align*}
P(B(x, r))
  \geq P(B(x, \e r/\rho))
& \geq \mu(B(x, \e r/\rho)) \min_{y \in B(x, \e r/\rho)} p(y) \\
& \geq \mu(B(x, \e r/\rho)) \frac{p(x)}{2} \\
& \geq \frac{p(x)}{2} \mu_* \left( \frac{\e r}{\rho} \right)^D
  = p_*(x) \mu^* r^D
  \geq p_*(x) \mu(B(x, r)).
\end{align*}
Also, trivially, $\forall r \in (0, \rho]$,
\begin{align*}
P(B(x, r))
  \leq \mu(B(x, r)) \max_{y \in B(x, r\rho/\e)} p(y)
  \leq p^*(x) \mu(B(x, r)).
\end{align*}
\end{proof}

\begin{lemma}
Consider a metric measure space $(\XX, d, \Sigma, \mu)$ of scaling dimension
$D$, and a $\mu$-absolutely continuous probability measure $P$, with continuous
density function $p : \XX \to [0, \infty)$ supported on
\[\X := \{x \in \XX : p(x) > 0\}.\]
For $x \in \X$, if $r > \left( \frac{k}{p_*(x) n} \right)^{1/D}$, then
\[\pr \left[ \e_k(x) > r \right]
  \leq e^{-p_*(x) r^D n} \left( e\frac{p_*(x) r^D n}{k} \right)^k.\]
and, if
$r \in \left[ 0, \left( \frac{k}{p^* n} \right)^{1/D}\right)$, then
\[\pr \left[ \e_k(x) \leq r \right]
  \leq e^{-p_*(x) r^D n} \left( \frac{e p^* r^D n}{k} \right)^{kp_*(x)/p^*}.\]
\label{lemma:KNN_concentration_appendix}
\end{lemma}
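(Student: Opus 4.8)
The plan is to reduce both statements to tail bounds for a single binomial random variable and then apply multiplicative Chernoff bounds. Fix $x \in \X$ and $r > 0$, and let $S := \#\{i : \|x - X_i\|_r \le r\}$ count the samples falling within distance $r$ of $x$. Since the $X_i$ are IID, $S \sim \mathrm{Binomial}(n, \theta)$ with $\theta := P(B(x,r))$ (the $\mu$-absolute continuity of $P$ makes the sphere $\{y : \|x-y\|_r = r\}$ null, so it is immaterial whether we use the open or closed ball). The defining observation is that the $k$-th nearest neighbor lies beyond radius $r$ exactly when fewer than $k$ samples fall within radius $r$; that is, $\{\e_k(x) > r\} = \{S \le k - 1\}$ and $\{\e_k(x) \le r\} = \{S \ge k\}$. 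Thus the first statement is a lower-tail bound and the second an upper-tail bound for $S$. Throughout I use the preceding lemma, whose local bounds give $p_*(x)\, r^D \le \theta \le p^*\, r^D$ (absorbing the ambient measure-scaling constant into $p_*$ and $p^*$), so that the mean $\E[S] = n\theta$ is sandwiched between $\lambda_* := p_*(x) r^D n$ and $\lambda^* := p^* r^D n$.

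For the first (upper-tail-of-$\e_k$) bound, the hypothesis $r > (k/(p_*(x)n))^{1/D}$ says precisely that $\lambda_* > k$, so $\E[S] = n\theta \ge \lambda_* > k$ and we are genuinely in the lower tail. The multiplicative Chernoff inequality, obtained by optimizing $\pr[S \le k] \le \inf_{t>0} e^{tk}\,\E[e^{-tS}]$ against $\E[e^{-tS}] \le \exp(n\theta(e^{-t}-1))$, yields $\pr[S \le k] \le e^{-n\theta}(e n\theta / k)^k$. Since $\pr[\e_k(x) > r] = \pr[S \le k-1] \le \pr[S \le k]$, and since the map $\mu \mapsto e^{-\mu}(e\mu/k)^k$ is decreasing for $\mu > k$, I may replace $n\theta$ by its lower bound $\lambda_*$, which produces exactly the claimed expression; note that here both the exponential prefactor and the polynomial factor are controlled by the same quantity $\lambda_*$, so no asymmetry arises.

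For the second (lower-tail-of-$\e_k$) bound, the hypothesis $r < (k/(p^*n))^{1/D}$ says $\lambda^* < k$, so $\E[S] \le \lambda^* < k$ and we are in the upper tail; the mirror-image Chernoff argument gives $\pr[\e_k(x) \le r] = \pr[S \ge k] \le e^{-n\theta}(en\theta/k)^k$. Using monotonicity of $\mu \mapsto e^{-\mu}(e\mu/k)^k$ on $(0,k)$ (now increasing), I replace $n\theta$ by its upper bound $\lambda^*$ to obtain $e^{-\lambda^*}(e\lambda^*/k)^k$. The delicate part is reconciling this with the asymmetric target: I would verify the elementary inequality $e^{-\lambda^*}(e\lambda^*/k)^k \le e^{-\lambda_*}(e\lambda^*/k)^{k\lambda_*/\lambda^*}$. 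Taking logarithms reduces it to $k\left(1 - \tfrac{\lambda_*}{\lambda^*}\right)\left(1 - \tfrac{\lambda^*}{k} + \ln \tfrac{\lambda^*}{k}\right) \le 0$, which holds because $\lambda_* \le \lambda^*$ and $\ln w \le w - 1$ for $w = \lambda^*/k \in (0,1)$. This yields the exponent $k\lambda_*/\lambda^* = kp_*(x)/p^*$ in place of $k$.

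This final algebraic reconciliation is where I expect the real work to lie, and it is precisely the source of the asymmetry flagged in the remark following Lemma \ref{lemma:KNN_concentration}: although the upper- and lower-tail Chernoff bounds are themselves symmetric, writing the result with a single, tightly controlled exponent forces one to trade the exact mean $n\theta$ against its two crude one-sided bounds $\lambda_*$ and $\lambda^*$ simultaneously, and only the one-sided inequality $\ln w \le w - 1$ makes that trade go through in the required direction, leaving the lower bound depending on $p_*(x)$ as well as $p^*$.
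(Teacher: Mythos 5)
Your proof is correct, and it rests on the same engine as the paper's: represent the number of samples in $B(x,r)$ as $S \sim \mathrm{Binomial}(n,\theta)$ with $\theta = P(B(x,r))$, translate the two events into binomial tail events, and apply multiplicative Chernoff bounds; your first bound is obtained in essentially the same way as the paper's. The genuine difference is in the second (lower-tail-of-$\e_k$) bound, where the two arguments trade the unknown mean $n\theta$ for the computable quantities $\lambda_* = p_*(x) r^D n$ and $\lambda^* = p^* r^D n$ in different orders. The paper calibrates the Chernoff parameter to $p^*$, setting $\delta = (k-\lambda^*)/\lambda^*$ so that $\{S \geq k\} \subseteq \{S \geq (1+\delta)\,n\theta\}$, and then lower-bounds the exponent $n\theta \geq \lambda_*$ in $\bigl(e^{\delta}/(1+\delta)^{1+\delta}\bigr)^{n\theta}$; the asymmetric exponent $k p_*(x)/p^*$ falls out of that computation directly, with no separate reconciliation step. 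You instead run Chernoff at the exact mean with threshold $k$, obtain the intermediate bound $e^{-\lambda^*}(e\lambda^*/k)^k$ by monotonicity of $\mu \mapsto e^{-\mu}(e\mu/k)^k$ on $(0,k)$, and then relax it to the stated form via the elementary inequality $k\left(1 - \lambda_*/\lambda^*\right)\left(1 - \lambda^*/k + \ln(\lambda^*/k)\right) \leq 0$, which checks out. Your route costs one extra algebraic lemma but buys something the paper's derivation hides: it makes explicit that the stated bound is a \emph{weakening} of the tighter symmetric bound $e^{-\lambda^*}(e\lambda^*/k)^k$, so the asymmetry in the lemma is an artifact of expressing the bound through $p_*$ and $p^*$ simultaneously rather than an intrinsic feature of the tail itself; in the paper's version the asymmetry instead enters because the unknown mean sits in the exponent of the Chernoff bound and must be replaced by $\lambda_*$. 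Both explanations are faithful to their respective derivations, and both proofs are valid.
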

\begin{proof}
Notice that, for all $x \in \X$ and $r > 0$,
\[\sum_{i = 1}^n 1_{\{X_i \in B(x, r)\}}
  \sim \mbox{Binomial} \left( n, P(B(x, r)) \right),\]
and hence that many standard concentration inequalities apply. Since we are
interested in small $r$ (and hence small $P(B(x, r))$), we prefer bounds on
relative error, and hence apply multiplicative Chernoff bounds. If
$r > \left( k/(p_*(x) n) \right)^{1/D}$, then, by definition of $p_*$,
$P(B(x, r)) < k/n$, and so, applying the multiplicative Chernoff bound with
$\delta := \frac{p_*(x) r^D n - k}{p_*(x) r^D n} > 0$ gives
\begin{align*}
\pr \left[ \e_k(x) > r \right]
& = \pr \left[ \sum_{i = 1}^n 1_{\{X_i \in B(x, r)\}} < k \right] \\
& \leq \pr \left[ \sum_{i = 1}^n 1_{\{X_i \in B(x, r)\}}
               < (1 - \delta) n P(B(x, r)) \right] \\
& \leq \left( \frac{e^{-\delta}}
                   {(1 - \delta)^{(1 - \delta)}} \right)^{n P(B(x, r))} \\
& = e^{-p_*(x) r^D n} \left( \frac{e p_*(x) r^D n}{k} \right)^k.
\end{align*}
Similarly, if $r < \left( k/(p^* n) \right)^{1/D}$, then, applying the
multiplicative Chernoff bound with
$\delta := \frac{k - p^* r^D n}{p^* r^D n} > 0$,
\begin{align*}
\pr \left[ \e_k(x) < r \right]
& = \pr \left[ \sum_{i = 1}^n 1_{\{X_i \in B(x, r)\}} \geq k \right] \\
& \leq \pr \left[ \sum_{i = 1}^n 1_{\{X_i \in B(x, r)\}}
               \geq (1 + \delta) n P(B(x, r)) \right] \\
& \leq \left( \frac{e^\delta}{(1 + \delta)^{(1 + \delta)}} \right)^{n P(B(x, r))} \\
& \leq e^{- p_*(x) r^D n} \left( \frac{e p^* r^D n}{k} \right)^{k p_*(x)/p^*}
\end{align*}
\end{proof}

The bound we prove below is written in a somewhat different form from the
version of Lemma \ref{lemma:KNN_functional_bdd_appendix} in the main paper.
This form follows somewhat more intuitively from Lemma
\ref{lemma:KNN_concentration_appendix}, but does not make obvious the
connection to the asymptotic Erlang distribution. To derive the form in the
paper, one simply integrates the integral below by parts, plugs in the
function
$x \mapsto f \left( p_*(x) \middle/ \frac{k/n}{c_D \e_k^D(x)} \right)$,
and applies the bound $(e/k)^k \leq \frac{e}{\sqrt{k}\Gamma(k)}$.

\begin{lemma}
Consider the setting of Lemma \ref{lemma:KNN_concentration_appendix} and assume $\X$ is
compact with diameter $\rho := \sup_{x,y \in \X} d(x, y)$. Suppose
$f : (0, \rho) \to \R$ is continuously differentiable, with $f' > 0$.
Then, for any $x \in \X$, we have the upper bound
\begin{align}
\E \left[ f_+(\e_k(x)) \right]
  \leq f_+\left( \left( \frac{k}{p_*(x) n} \right)^{\frac{1}{D}} \right)
  + \frac{(e/k)^k}{D(np_*(x))^{\frac{1}{D}}}
    \int_k^{n p_*(x) \rho^D}
        \hspace{-6mm}
        e^{-y} y^{\frac{Dk + 1 - D}{D}}
        f' \left( \left( \frac{y}{n p_*(x)} \right)^{\frac{1}{D}} \right) 
    \, dy
\label{ineq:KNN_functional_upper_bdd_appendix}
\end{align}
and the lower bound
\begin{align}
\E \left[ f_-(\e_k(x)) \right]
  \leq f_-\left( \left( \frac{k}{p^* n} \right)^{\frac{1}{D}} \right)
  + \frac{\left( e/\kappa(x) \right)^{\kappa(x)}}{D\left( n p_*(x) \right)^{\frac{1}{D}}}
        \int_0^{\kappa(x)}
            \hspace{-6mm}
            e^{-y}
            y^{\frac{D\kappa(x) + 1 - D}{D}}
            f' \left( \left( \frac{y}{np_*(x)} \right)^{\frac{1}{D}} \right)
        \, dy,
\label{ineq:KNN_functional_lower_bdd_appendix}
\end{align}
where $f_+(x) = \max\{0, f(x)\}$ and $f_-(x) = -\min\{0, f(x)\}$ denote the
positive and negative parts of $f$, respectively, and $\kappa(x) := k p_*(x)/p^*$.
\label{lemma:KNN_functional_bdd_appendix}
\end{lemma}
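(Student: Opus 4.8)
The plan is to reduce both bounds to the concentration inequalities of Lemma~\ref{lemma:KNN_concentration_appendix} via a single representation of $\E[g(\e_k(x))]$ in terms of the tail and lower-tail of $\e_k(x)$, applied once with $g = f_+$ and once with $g = f_-$. Since $\e_k(x) \in (0,\rho]$ almost surely and $P$ is $\mu$-absolutely continuous (so $\e_k(x)$ has no atoms), the fundamental theorem of calculus gives, for any reference point $a \in (0,\rho)$ and any continuously differentiable $g$, the pathwise identity
\[
g(\e_k(x)) = g(a) + \int_a^\rho g'(r)\, 1_{\{\e_k(x) > r\}}\, dr - \int_0^a g'(r)\, 1_{\{\e_k(x) \leq r\}}\, dr.
\]
Taking expectations and applying Tonelli's theorem (legitimate because, for $g = f_+$ or $g = f_-$, the integrand has a fixed sign) yields
\[
\E[g(\e_k(x))] = g(a) + \int_a^\rho g'(r)\,\pr[\e_k(x) > r]\, dr - \int_0^a g'(r)\,\pr[\e_k(x) \leq r]\, dr.
\]

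For the upper bound I would take $g = f_+$ and $a = r_0 := (k/(p_*(x)n))^{1/D}$, working in the regime $r_0 < \rho$ in which the stated integral is nonempty. Since $f_+$ is nondecreasing, its derivative is nonnegative, so the subtracted term is nonnegative and may be dropped; moreover $(f_+)'(r) = f'(r)\, 1_{\{f(r) > 0\}} \leq f'(r)$ because $f' > 0$. This gives $\E[f_+(\e_k(x))] \leq f_+(r_0) + \int_{r_0}^\rho f'(r)\,\pr[\e_k(x) > r]\, dr$. On $(r_0, \rho)$ the hypothesis $r > r_0$ of the first part of Lemma~\ref{lemma:KNN_concentration_appendix} holds, so I can replace $\pr[\e_k(x) > r]$ by $e^{-p_*(x)r^Dn}(ep_*(x)r^Dn/k)^k$ (using $f' > 0$ to preserve the inequality). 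The change of variables $y = np_*(x)r^D$ then turns this into the claimed form \eqref{ineq:KNN_functional_upper_bdd_appendix}.

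The lower bound is symmetric, with $g = f_-$ and $a = r_1 := (k/(p^* n))^{1/D} \leq r_0$ (again in the regime $r_1 < \rho$). Now $f_-$ is nonincreasing, so $(f_-)' \leq 0$ and the middle term $\int_{r_1}^\rho (f_-)'(r)\,\pr[\e_k(x) > r]\, dr$ is nonpositive and may be dropped, leaving $\E[f_-(\e_k(x))] \leq f_-(r_1) + \int_0^{r_1} |(f_-)'(r)|\,\pr[\e_k(x) \leq r]\, dr$, where $|(f_-)'| = f'\, 1_{\{f < 0\}} \leq f'$. On $(0, r_1)$ the hypothesis $r < r_1$ of the second part of Lemma~\ref{lemma:KNN_concentration_appendix} holds, so $\pr[\e_k(x) \leq r] \leq e^{-p_*(x)r^Dn}(ep^*r^Dn/k)^{\kappa(x)}$ with $\kappa(x) = kp_*(x)/p^*$; the same substitution $y = np_*(x)r^D$ yields \eqref{ineq:KNN_functional_lower_bdd_appendix}, after using $np_*(x)r^D/\kappa(x) = np^*r^D/k$ to match the exponential factor.

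The calculation is essentially routine once the representation is in place; the only parts requiring care are (i) checking that the sign of $(f_\pm)'$ makes exactly the right term droppable in each case, and (ii) bookkeeping the change of variables $y = np_*(x)r^D$ so that the powers of $y$, the $(e/k)^k$ (resp.\ $(e/\kappa(x))^{\kappa(x)}$) prefactor, and the integration limits $k$ and $np_*(x)\rho^D$ (resp.\ $0$ and $\kappa(x)$) all line up with the statement. I expect (ii) to be the main bookkeeping obstacle, while the implicit assumption $r_0, r_1 < \rho$ (i.e.\ $n$ large enough for the stated integrals to be nonempty) is the only genuine restriction.
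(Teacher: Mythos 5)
Your proposal is correct and takes essentially the same route as the paper's own proof: both split a tail-integral representation of the expectation at the reference point $\left( k/(p_*(x)n) \right)^{1/D}$ (resp.\ $\left( k/(p^* n) \right)^{1/D}$), bound the remaining tail using Lemma \ref{lemma:KNN_concentration_appendix}, and finish with the change of variables $y = n p_*(x) r^D$, with the sign conditions on $(f_\pm)'$ justifying which term is dropped. The only cosmetic difference is that you integrate in the argument domain via the pathwise FTC/layer-cake identity, whereas the paper integrates the survival function $\pr\left[ f(\e_k(x)) > \e \right]$ in the value domain and passes through the generalized inverse $f\inv$; the two computations coincide under the substitution $\e = f(r)$, and your version merely avoids the generalized-inverse bookkeeping.
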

\begin{proof}
For notational simplicity, we prove the statement for
$g(x) = f \left( n p_*(x) x^D \right)$; the main result follows by substituting
$f$ back in.

Define
\[\e_0^+ = f_+\left( \left( \frac{k}{p_*(x) n} \right)^{\frac{1}{D}} \right)
  \; \mbox{ and } \;
\e_0^- = f_-\left( \left( \frac{k}{p^* n} \right)^{\frac{1}{D}} \right).\]
Writing the expectation in terms of the survival function,
\begin{align}\
\notag
\E \left[ f_+(\e_k(x)) \right]
& = \int_0^\infty \pr \left[ f(\e_k(x)) > \e \right] \, d\e \\
\notag
& = \int_0^{\e_0^+} \pr \left[ f(\e_k(x)) > \e \right] \, d\e
\notag
  + \int_{\e_0^+}^{f_+(\rho)} \pr \left[ f(\e_k(x)) > \e \right] \, d\e, \\
\label{ineq:KNN_stat_upper_piece2}
& \leq \e_0^+
  + \int_{\e_0^+}^{f_+(\rho)} \pr \left[ f(\e_k(x)) > \e \right] \, d\e,
\end{align}
since $f$ is non-decreasing and $\pr \left[ \e_k(x) > \rho \right] = 0$.
By construction of $\e_0^+$, for all $\e > \e_0^+$,
$f\inv(\e) > \left( k/(p_*(x) n) \right)^{1/D}$. Hence, applying Lemma
\ref{lemma:KNN_concentration_appendix} followed by the change of variables
$y = np_*(x) \left( f\inv(\e) \right)^D$ gives
\footnote{$f$ need not be surjective, but the generalized inverse
$f\inv : [-\infty, \infty] \to [0, \infty]$ defined by
$f\inv(\e) := \inf \{x \in (0, \infty) : f(x) \geq \e\}$ suffices here.}
\begin{align*}
\int_{\e_0^+}^{f_+(\rho)} \pr \left[ \e_k(x) > f\inv(\e) \right] \, d\e
& \leq \int_{\e_0^+}^{f_+(\rho)}
           e^{-n p_*(x) \left( f\inv(\e) \right)^D}
           \left( \frac{enp_*(x) \left( f\inv(\e) \right)^D}{k} \right)^k
       \, d\e \\
& = \frac{(e/k)^k}{D(np_*(x))^{\frac{1}{D}}}
      \int_k^{n p_*(x) \rho^D}
          e^{-y} y^{\frac{kD + 1 - D}{D}}
          f' \left( \left( \frac{y}{np_*(x)} \right)^{\frac{1}{D}} \right) 
      \, dy,
\end{align*}
Together with (\ref{ineq:KNN_stat_upper_piece2}), this gives the
upper bound (\ref{ineq:KNN_functional_upper_bdd_appendix}). Similar steps give
\begin{equation}
\E \left[ f(\e_k(x)) \right]
  \leq \e_0^-
  + \int_{\e_0^-}^{f_-(0)} \pr \left[ f(\e_k(x)) < -\e \right] \, d\e.
\label{ineq:KNN_stat_lower_part}
\end{equation}
Applying Lemma \ref{lemma:KNN_concentration_appendix} followed the change of variables
$y = n p_*(x) \left( f\inv(-\e) \right)^D$ gives
\begin{align*}
\int_{\e_0^-}^{f_-(\rho)} \pr \left[ \e_k(x) < f\inv(-\e) \right] \, d\e
& \leq  \frac{\left( e/\kappa(x) \right)^{\kappa(x)}}{D\left( n p_*(x) \right)^{\frac{1}{D}}}
        \int_0^{\kappa(x)}
            e^{-y}
            y^{\frac{D\kappa(x) + 1 - D}{D}}
            f' \left( \left( \frac{y}{np_*(x)} \right)^{\frac{1}{D}} \right)
        \, dy
\end{align*}
Together with inequality (\ref{ineq:KNN_stat_lower_part}), this gives the
result (\ref{ineq:KNN_functional_lower_bdd_appendix}).
\end{proof}

\subsection{Applications of Lemma \ref{lemma:KNN_functional_bdd_appendix}}
When $f(x) = \log(x)$, (\ref{ineq:KNN_functional_upper_bdd_appendix}) gives
\begin{align}
\notag
\E \left[ \log_+(\e_k(x)) \right]
  \leq \frac{1}{D} \log_+ \left( \frac{k}{p_*(x) n} \right)
  + \left( \frac{e}{k} \right)^k \frac{\Gamma(k, k)}{D}
  \leq \frac{1}{D}
       \left( \log_+ \left( \frac{k}{p_*(x) n} \right) + 1 \right)
\label{ineq:pos_log_stat_appendix}
\end{align}
and (\ref{ineq:KNN_functional_lower_bdd_appendix}) gives
\footnote{$\Gamma(s, x) := \int_x^\infty t^{s - 1} e^{-t} \, dt$ and
$\gamma(s, x) := \int_0^x t^{s - 1} e^{-t} \, dt$ denote the upper and lower
incomplete Gamma functions respectively. We used the bounds
$\Gamma(s, x), x\gamma(s, x) \leq x^s e^{-x}$.}
\begin{align}
\E \left[ \log_-(\e_k(x)) \right]
& \leq \frac{1}{D}
        \left( \log_-\left( \frac{k}{p^* n} \right)
              + \left( \frac{e}{\kappa(x)} \right)^{\kappa(x)} \gamma(\kappa(x), \kappa(x))
        \right) \\
& \leq \frac{1}{D}
        \left( \log_-\left( \frac{k}{p^* n} \right) + \frac{1}{\kappa(x)} \right).
\label{ineq:neg_log_stat_appendix}
\end{align}
For $\alpha > 0$, $f(x) = x^\alpha$,
(\ref{ineq:KNN_functional_upper_bdd_appendix})
gives
\begin{align}
\notag
\E \left[ \e_k^\alpha(x) \right]
& \leq \left( \frac{k}{p_*(x) n} \right)^{\frac{\alpha}{D}}
    + \left( \frac{e}{k} \right)^k
      \frac{\alpha\Gamma\left( k + \alpha/D, k \right)}
           {D(np_*(x))^{\alpha/D}} \\
& \leq C_2
       \left( \frac{k}{p_*(x) n} \right)^{\frac{\alpha}{D}},
\label{ineq:pos_moment_stat_appendix}
\end{align} 
where $C_2 = 1 + \frac{\alpha}{D}$. For any
$\alpha \in [-D\kappa(x), 0]$, when $f(x) = -x^\alpha$,
(\ref{ineq:KNN_functional_lower_bdd_appendix}) gives
\begin{align}
\E \left[ \e_k^\alpha(x) \right]
& \leq \left( \frac{k}{p^* n} \right)^{\frac{\alpha}{D}}
    + \left( \frac{e}{\kappa(x)} \right)^{\kappa(x)}
      \frac{\alpha\gamma\left( \kappa(x) + \alpha/D, \kappa(x) \right)}
           {D(np_*(x))^{\alpha/D}} \\
& \leq C_3
    \left( \frac{k}{p^* n} \right)^{\frac{\alpha}{D}},
\label{ineq:neg_moment_stat_appendix}
\end{align}
where
$C_3 = 1 + \frac{\alpha}{D\kappa(x) + \alpha}$.

%
\section{Proof of Bias Bound}

\begin{theorem}
Consider the setting of Lemma \ref{lemma:KNN_concentration_appendix}. Suppose
Suppose $p$ is $\beta$-H\"older continuous, for some $\beta \in (0, 2]$.
Let $f : (0, \infty) \to \R$ be differentiable, and define
$M_f : \X \to [0, \infty)$ by
\[M_f(x)
  := \sup_{z \in \left[ \frac{p_*(x)}{\mu^*}, \frac{p^*}{\mu_*} \right]}
      \left\| \nabla f(z) \right\|\]
(assuming this quantity is finite for almost all $x \in \X$). Suppose that
\[C_M
  := \E_{X \sim p} \left[
        \frac{M_f(X)}{\left( p_*(X) \right)^{\frac{\beta}{D}}}
  \right]
  < \infty.\]
Then, for $C_B := C_M L$,
\[\left| \E_{X, X_1,\dots,X_n \sim P} \left[ f(p_{\e_k(X)}(X)) \right]
          - F(p) \right|
  \leq C_B \left( \frac{k}{n} \right)^{\frac{\beta}{D}}.\]
\label{thm:gen_bias_bound_appendix}
\end{theorem}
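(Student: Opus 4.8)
The plan is to reduce the bias of the bias-corrected estimator to a pointwise comparison between $f(p_{\e_k(x)}(x))$ and $f(p(x))$, averaged over a held-out point $x$. By the defining property \eqref{eq:bias_correction} of the correction $\B$ together with exchangeability of the samples, $\E[\hat F_\B(P)]$ equals $\E_{X, X_1,\dots,X_n}[f(p_{\e_k(X)}(X))]$, where $X \sim P$ is held out and $\e_k(X)$ is its $k$-NN distance to the sample; hence the quantity bounded in the theorem is exactly the estimator's bias. I would first pull the difference inside the expectation over $X$ and apply the triangle inequality, so that it suffices to control, for each fixed $x$, the quantity $\E_{X_1,\dots,X_n}\bigl| f(p_{\e_k(x)}(x)) - f(p(x)) \bigr|$ and then integrate the resulting pointwise bound against $p \, d\mu$.

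The pointwise bound rests on two ingredients. First, a deterministic estimate: since $\e_k(x) \le \rho$ almost surely, Lemma \ref{lemma:p_*_exists} places both $p(x)$ and the ball-average $p_{\e_k(x)}(x) = P(B(x,\e_k(x)))/\mu(B(x,\e_k(x)))$ inside an interval on which $M_f(x)$ dominates $\|\nabla f\|$, so the mean value theorem gives $|f(p_{\e_k(x)}(x)) - f(p(x))| \le M_f(x)\, |p_{\e_k(x)}(x) - p(x)|$. Second, $\beta$-H\"older continuity controls the smoothing error: writing $p_{\e_k(x)}(x)$ as the $\mu$-average of $p(y)$ over $B(x,\e_k(x))$ and using $|p(y) - p(x)| \le L\, d(x,y)^\beta \le L\, \e_k^\beta(x)$ on that ball yields $|p_{\e_k(x)}(x) - p(x)| \le L\, \e_k^\beta(x)$. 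For $\beta \in (1,2]$ this step additionally uses that the first-order Taylor term integrates to zero over a symmetric ball, leaving only the $(\beta-1)$-H\"older remainder.

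Combining the two ingredients and taking the expectation over the samples gives $\E_{X_1,\dots,X_n}|f(p_{\e_k(x)}(x)) - f(p(x))| \le M_f(x)\, L\, \E[\e_k^\beta(x)]$, at which point I would invoke the moment bound \eqref{ineq:pos_moment_stat_appendix} with $\alpha = \beta$, namely $\E[\e_k^\beta(x)] \le (1 + \beta/D)\,(k/(p_*(x) n))^{\beta/D}$. Factoring out $(k/n)^{\beta/D}$ and integrating over $x \sim P$ produces $(1 + \beta/D)\, L\, (k/n)^{\beta/D}\, \E_{X \sim P}[M_f(X)/p_*(X)^{\beta/D}] = (1 + \beta/D)\, L\, C_M\, (k/n)^{\beta/D}$, which is the claimed bound $C_B (k/n)^{\beta/D}$ up to the harmless constant $1 + \beta/D$.

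The main obstacle is the moment bound on $\E[\e_k^\beta(x)]$ and, through it, the behaviour near $\partial\X$. Because $p$ (and hence $p_*$) vanishes at the boundary, $\E[\e_k^\beta(x)] \asymp (k/(n p_*(x)))^{\beta/D}$ blows up there, so the pointwise estimate alone does not suffice; the assumption $C_M < \infty$ is precisely what renders the integrated bias finite, and it is the reason for using the local lower bound $p_*(x)$ rather than a global constant. I expect the delicate points to be (i) verifying that \eqref{ineq:pos_moment_stat_appendix}, which descends from the concentration Lemma \ref{lemma:KNN_concentration_appendix}, applies uniformly in $x \in \X^\circ$, and (ii) the $\beta \in (1,2]$ regime, where the smoothing-error bound genuinely relies on the symmetry of balls rather than on H\"older continuity alone.
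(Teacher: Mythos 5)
Your proposal follows essentially the same route as the paper's proof: hold out a point $X$ so the bias becomes $\E_{X}\left[\E_{X_1,\dots,X_n}\left[f(p_{\e_k(X)}(X)) - f(p(X))\right]\right]$, apply the mean value theorem with $M_f$ controlling $\|\nabla f\|$ on $[p_*(x), p^*]$, bound the smoothing error via H\"older continuity, invoke the moment bound \eqref{ineq:pos_moment_stat_appendix} with $\alpha = \beta$, and integrate against $P$ using $C_M < \infty$. The one discrepancy is your final constant $(1 + \beta/D)\, C_M L$: the paper bounds $|p_{\e_k(x)}(x) - p(x)|$ by the \emph{average} of $L\, d(x,y)^\beta$ over the ball, which is $\frac{D}{D+\beta} L\, \e_k^\beta(x)$ rather than the supremum $L\, \e_k^\beta(x)$, and this factor $\frac{D}{D+\beta}$ exactly cancels the $C_2 = 1 + \beta/D$ from the moment bound, recovering the stated constant $C_B = C_M L$ exactly; your sup-based bound is otherwise fine and only weakens the constant. (Incidentally, your explicit remark that the $\beta \in (1,2]$ case needs the first-order term to vanish by ball symmetry is a point the paper's own proof glosses over.)
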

\begin{proof}
By construction of $p_*$ and $p^*$,
\[p_*(x) \leq p_\e(x) = \frac{P(B(x, \e))}{\mu(B(x, \e))} \leq p^*.\]
Also, by the Lebesgue differentiation theorem
\citep{lebesgue10differentiation}, for $\mu$-almost all $x \in \X$,
\[p_*(x) \leq p(x) \leq p_*.\]
For all $x \in \X$, applying the mean value theorem followed by inequality
(\ref{ineq:pos_moment_stat_appendix}),
\begin{align*}
\E_{X_1,\dots,X_n \sim p} \left[ \left| f(p(x)) - f(p_{\e_k(x)}(x)) \right| \right]
& \leq \E_{X_1,\dots,X_n \sim p} \left[ \left\| \nabla f(\xi(x)) \right\| \left| p(x) - p_{\e_k(x)}(x) \right| \right] \\
& \leq M_f(x) \E_{X_1,\dots,X_n \sim p} \left[ \left| p(x) - p_{\e_k(x)}(x) \right| \right] \\
& \leq \frac{M_f(x) LD}{D + \beta}
       \E_{X_1,\dots,X_n \sim P} \left[ \e_k^{\beta}(x) \right] \\
& \leq \frac{C_2 M_f(x) LD}{D + \beta} \left( \frac{k}{p_*(x) n}
\right)^{\frac{\beta}{D}}
\end{align*}
Hence,
\begin{align*}
\left| \E_{X_1,\dots,X_n \sim p} \left[ F(p) - \hat F(p) \right] \right|
& = \left|
      \E_{X \sim p} \left[
          \E_{X_1,\dots,X_n \sim p} \left[ f(p(X)) - f(p_{\e_k(X)}(X)) \right]
      \right]
  \right| \\
& \leq \frac{C_2 LD}{D + \beta}
    \E_{X \sim p} \left[
        \frac{M_f(X)}{\left( p_*(X) \right)^{\frac{\beta}{D}}}
    \right]
    \left( \frac{k}{n} \right)^{\frac{\beta}{D}}
  = \frac{C_2 C_M LD}{D + \beta}
    \left( \frac{k}{n} \right)^{\frac{\beta}{D}}.
\end{align*}
\end{proof}

\begin{lemma}
Let $c > 0$. Suppose there exist $b_\partial \in (0, \frac{1}{c})$,
$c_\partial, \rho_\partial > 0$ such that for all $x \in \X$ with
$\e(x) := \dist(x, \partial\X) < \rho_\partial$,
$p(x) \geq c_\partial \e^{b_\partial}(x)$. Then,
\[\int_\X \left( p_*(x) \right)^{-c} \, d\mu(x) < \infty.\]
\label{lemma:negative_integral_bound_appendix}
\end{lemma}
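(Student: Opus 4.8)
The plan is to prove the pointwise lower bound $p_*(x) \geq C_0 \left( \dist(x, \partial\X) \right)^{b_\partial}$ on a neighborhood of $\partial\X$ and then integrate it. Here I would take $p_*$ to be the sharpest admissible local lower bound, $p_*(x) = \inf_{r \in (0, \rho]} \frac{P(B(x,r))}{\mu(B(x,r))}$; this still satisfies the conclusion of Lemma \ref{lemma:p_*_exists}, and using it rather than the (smaller) value produced by that lemma's constructive proof is exactly what yields the exponent $b_\partial$ instead of $b_\partial + D$, and hence what makes the stated threshold $b_\partial < 1/c$ the correct one. Writing $\e(x) := \dist(x, \partial\X)$, I would split $\X$ into the boundary layer $\{\e < \rho_\partial\}$ and its complement. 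On the complement $\{\e \geq \rho_\partial\}$, a compact subset of $\X^\circ$, the reasoning below (with the constant $\rho_\partial$ in place of $\e(x)$) bounds $p_*$ below by a positive constant, so, since $\mu(\X) < \infty$, that region contributes finitely; all the content is in the boundary layer.

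On the boundary layer I would first convert the hypothesis into the statement that there is a constant $C > 0$ with $p(y) \geq C \left( \e(x) \right)^{b_\partial}$ for every $y$ with $\e(y) \geq \e(x)$: when $\e(y) < \rho_\partial$ this is the boundary hypothesis together with $\e(y)^{b_\partial} \geq \e(x)^{b_\partial}$, and when $\e(y) \geq \rho_\partial$ it follows from the uniform positive lower bound on $p$ over the compact interior $\{\e \geq \rho_\partial\}$ (using $\e(x) < \rho_\partial$). The geometric step is then to show that, for every $r \in (0, \rho]$, the ball $B(x, r)$ contains a sub-ball of measure $\gtrsim \min(r, r_0)^D$ lying entirely in $\{\e \geq \e(x)\}$. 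For small $r$ I would produce this by displacing $x$ a distance $r/2$ into the interior (a direction raising $\dist(\cdot, \partial\X)$ at a rate bounded below by a dimensional constant), so that $B(x', r/2) \subseteq B(x,r)$ satisfies $\e(\cdot) \geq \e(x)$; for large $r$ I would reuse this sub-ball at a fixed radius $r_0$ and bound $\mu(B(x,r)) \leq \mu^* \rho^D$ from above. In either case the scaling-dimension bounds give
\[ \frac{P(B(x,r))}{\mu(B(x,r))} \geq C_0 \left( \e(x) \right)^{b_\partial} \]
uniformly in $r$, so $p_*(x) \geq C_0 \left( \e(x) \right)^{b_\partial}$.

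It then remains to integrate. On the boundary layer, $\left( p_*(x) \right)^{-c} \leq C_0^{-c} \left( \e(x) \right)^{-c b_\partial}$, so it suffices that $\int_{\{\e < \rho_\partial\}} \left( \e(x) \right)^{-c b_\partial} \, d\mu(x) < \infty$. I would bound this via the layer-cake formula together with the Minkowski-content estimate $\mu(\{x \in \X : \e(x) < t\}) \leq C_\partial t$ (immediate for the cube, where $\{\e < t\} = \X \sminus [t, 1-t]^D$), which reduces the integral to a constant multiple of $\int_0^{\rho_\partial} t^{-c b_\partial} \, dt$. This converges precisely because $c b_\partial < 1$, i.e.\ because $b_\partial < 1/c$. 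Adding the finite interior contribution gives the claim.

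The step I expect to be the crux is the uniform-in-$r$ density-ratio bound: one must prevent $\frac{P(B(x,r))}{\mu(B(x,r))}$ from dipping below $\sim \left( \e(x) \right)^{b_\partial}$ at \emph{any} scale, not merely as $r \to 0$ (where it tends to $p(x)$). This forces a use of geometric regularity of $\X$ — that moving into the interior increases the distance to $\partial\X$ proportionally, and that balls contain inward sub-balls of comparable size (an interior cone / corkscrew property, clear for the cube but requiring a hypothesis such as a Lipschitz boundary in the general metric-measure setting). Once this lower bound on $p_*$ is secured, the remaining integration is routine, and the role of $b_\partial < 1/c$ is simply to keep the boundary singularity $\e^{-c b_\partial}$ integrable. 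Notably, this argument uses only continuity, interior positivity, and the boundary hypothesis — not H\"older continuity — consistent with the fact that $\beta$ does not appear in the statement.
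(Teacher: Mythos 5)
Your proposal is correct, and structurally it is the same argument as the paper's: split $\X$ into the boundary layer $\{x : \dist(x,\partial\X) < \rho_\partial\}$ and its complement, bound $p_*$ below by a positive constant on the compact complement, establish the pointwise bound $p_*(x) \gtrsim \min\{\ell,\, c_\partial \dist(x,\partial\X)^{b_\partial}\}$ on the boundary layer, and integrate the resulting singularity, which is finite precisely because $c\,b_\partial < 1$. The differences are in how the two nontrivial steps are executed, and in both cases your version is the more complete one. First, your observation that $p_*$ must be taken to be the sharpest admissible local lower bound $\inf_{r \in (0,\rho]} P(B(x,r))/\mu(B(x,r))$, rather than the function constructed in the proof of Lemma \ref{lemma:p_*_exists}, is exactly right and is needed for the lemma to be true at the stated threshold: the constructed function decays like $\dist(x,\partial\X)^{b_\partial + D}$ near $\partial\X$, for which $b_\partial < 1/c$ would not suffice; the paper is silent on this point. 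Second, the paper simply \emph{asserts} the key bound $p_*(x) \geq \min\{\ell, c_\partial \e^{b_\partial}(x)\}/\mu(B(x,\sqrt{D}))$ on the boundary layer, whereas your inward-displacement argument — producing inside each $B(x,r)$, $r \in (0,\rho]$, a sub-ball of measure $\gtrsim \min(r,r_0)^D$ on which $\dist(\cdot,\partial\X) \geq \dist(x,\partial\X)$ and hence $p \gtrsim \dist(x,\partial\X)^{b_\partial}$ — is precisely the justification the paper omits, and you correctly identify it as the crux: the bound must hold uniformly in $r$, not just as $r \to 0$, and it relies on boundary regularity of the cube (a cone/corkscrew property) that would need to be an explicit hypothesis in the general metric-measure setting of the appendix. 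The remaining difference, your layer-cake/Minkowski-content integration versus the paper's reduction to $2D$ one-dimensional integrals over the faces, is immaterial.
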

\begin{proof}
Let $\X_\partial := \{x \in \X : \dist(x, \partial\X) < \rho_\partial\}$ denote
the region within $\rho_\partial$ of $\partial\X$.
Since $p_*$ is continuous and strictly positive on the compact set
$\X\sminus\X_\partial$, it has a positive lower bound
$\ell := \inf_{x \in \X\sminus\X_\partial}$ on this set, and it suffices to
show
\[\int_{\X\sminus\X_\partial} \left( p_*(x) \right)^{-c} \, d\mu(x) < \infty.\]
For all $x \in \X_\partial$,
\[p_*(x)
  \geq \frac{\min\{\ell, c_\partial \e^{b_\partial}(x)\}}{\mu(B(x, \sqrt{D}))}
.\]
Hence, 
\[\int_{\X\sminus\X_\partial} \left( p_*(x) \right)^{-c} \, d\mu(x)
  \leq \int_{\X\sminus\X_\partial} \ell^{-c} \, d\mu(x)
    + \int_{\X\sminus\X_\partial} c_\partial^{-c} \e^{-b_\partial/c}(x) \, d\mu(x).\]
The first integral is trivially bounded by $\ell^{-c}$. Since $\partial\X$
is the union of $2D$ ``squares'' of dimension $D - 1$, the second integral can
be reduced to the sum of $2D$ integrals of dimension $1$, giving the bound
\[2D c_\partial^{-c} \int_0^{\rho_\partial} x^{-b_\partial/c}(x) \, dx.\]
Since $b_\partial/c < 1$, the integral is finite.
\end{proof}

\section{Proof of Variance Bound}

\begin{theorem}
{\bf (Variance Bound)}
Suppose that $\B \circ f$ is continuously differentiable and strictly monotone.
Assume that $C_{f,p} := \E_{X \sim P} \left[ \B^2(f(p_*(X))) \right] < \infty$,
and that $C_f := \int_0^\infty e^{-y} y^k f(y) < \infty$. Then, for
\[C_V
  := 2\left( 1 + N_{k,D} \right)
      \left( 3 + 4k \right)
      \left( C_{f,p} + C_f \right),
  \quad \mbox{ we have } \quad
  \Var \left[ \hat F_\B(P) \right]
  \leq \frac{C_V}{n}.\]
\label{thm:variance_bound_appendix}
\end{theorem}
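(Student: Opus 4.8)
The plan is to bound the variance via the Efron--Stein inequality \citep{efronStein81} in its \emph{leave-one-out} form, using the geometric fact that any sample can be a $k$-nearest neighbor of at most $N_{k,D}$ other samples to control how much $\hat F_\B(P)$ changes under deletion of a single point. Write $\hat F_\B(P) = \frac{1}{n}\sum_{j=1}^n \phi_j$, where $\phi_j := \B(f(\hat p_k(X_j)))$ depends on $X_j$ only through $X_j$ itself and the distance $\e_k(X_j)$. For each $i$ I would define the surrogate $Z_i := \frac{1}{n}\sum_{j \ne i}\phi_j^{-i}$, where $\phi_j^{-i}$ is computed exactly as $\phi_j$ but with $\e_k(X_j)$ replaced by the $k$-NN distance of $X_j$ in the sample with $X_i$ \emph{deleted} (retaining the $k/n$ normalization, which we are free to do since $Z_i$ need only be independent of $X_i$). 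Because $Z_i$ does not depend on $X_i$, Efron--Stein gives $\Var[\hat F_\B(P)] \le \sum_{i=1}^n \E[(\hat F_\B(P) - Z_i)^2]$. Using the deletion form rather than the resampling form is the point that later yields the clean factor $1 + N_{k,D}$ instead of $1 + 2N_{k,D}$.

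The first key step is to count the terms in which $\hat F_\B(P)$ and $Z_i$ disagree. The $i$-th term is always removed, and a term $j \ne i$ differs only if $X_i$ was among the $k$ nearest neighbors of $X_j$; writing $T_i$ for the set of such indices $j$, the geometric fact gives $|T_i| \le N_{k,D}$. Hence $\hat F_\B(P) - Z_i = \frac{1}{n}(\phi_i + \sum_{j \in T_i}(\phi_j - \phi_j^{-i}))$ has at most $1 + N_{k,D}$ nonzero summands, so Cauchy--Schwarz gives $(\hat F_\B(P) - Z_i)^2 \le \frac{1 + N_{k,D}}{n^2}(\phi_i^2 + \sum_{j \in T_i}(\phi_j - \phi_j^{-i})^2)$.

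The second key step is a double-counting identity. Summing over $i$ and exchanging the order of summation, each index $j$ is counted by exactly the $k$ values of $i$ for which $X_i$ is a $k$-NN of $X_j$ (exactly $k$ of them, since $P$ is nonatomic and the $k$-NN distances of $X_j$ are a.s. distinct). Moreover, deleting \emph{any} one of these $k$ nearest neighbors replaces $\e_k(X_j)$ by the \emph{same} value $\e_{k+1}(X_j)$, so $\phi_j^{-i}$ equals a common quantity $\tilde\phi_j$ for all $k$ relevant $i$. This converts the double sum into $\sum_j k(\phi_j - \tilde\phi_j)^2 \le 2k\sum_j(\phi_j^2 + \tilde\phi_j^2)$, and, using that the summands are identically distributed, yields $\Var[\hat F_\B(P)] \le \frac{1 + N_{k,D}}{n}((1 + 2k)\E[\phi_1^2] + 2k\,\E[\tilde\phi_1^2])$.

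The final and most delicate step is bounding the second moments $\E[\phi_1^2] = \E[\B^2(f(\hat p_k(X_1)))]$ and $\E[\tilde\phi_1^2]$. Here I would mirror the proof of Lemma \ref{lemma:KNN_functional_bdd_appendix}: viewing $\B^2(f(\hat p_k(X_1)))$ as a function of $\e_k(X_1)$ through $\hat p_k = \frac{k/n}{c_{D,r}\e_k^D}$, split it at the zero of $\B \circ f$ into monotone pieces and apply the Chernoff tail bounds of Lemma \ref{lemma:KNN_concentration_appendix}. Each moment then decomposes into a contribution from the representative value $\hat p_k \approx p_*(X_1)$, which integrates against $p$ to $C_{f,p} = \E[\B^2(f(p_*(X)))]$, plus a Gamma-type tail integral of Erlang form $\int_0^\infty e^{-y}y^k(\cdots)\,dy$ controlled by $C_f$; the bound on $\E[\tilde\phi_1^2]$ is identical with $k$ replaced by $k+1$. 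Collecting $1 + N_{k,D}$, the $k$-dependent factors, and $C_{f,p} + C_f$ (absorbing the constant slack into the stated $2(1 + N_{k,D})(3 + 4k)$) gives $\Var[\hat F_\B(P)] \le C_V/n$. I expect this last moment bound to be the main obstacle, since it is where the singularity of $f$ near zero density must be tamed uniformly enough for the finiteness of $C_f$ and $C_{f,p}$ to suffice.
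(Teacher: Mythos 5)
Your proof is correct and follows the same skeleton as the paper's: Efron--Stein, the geometric fact that a sample can be among the $k$-nearest neighbors of at most $N_{k,D}$ others, Cauchy--Schwarz over the at most $1 + N_{k,D}$ affected terms, and second-moment bounds obtained from Lemma \ref{lemma:KNN_functional_bdd_appendix} after splitting $\B \circ f$ into monotone pieces, with $k+1$ replacing $k$ for the deleted-neighbor terms. You differ in two pieces of bookkeeping, both legitimate and both slightly cleaner. First, you use the surrogate form of Efron--Stein, $\Var [ \hat F_\B(P) ] \le \sum_{i} \E [ ( \hat F_\B(P) - Z_i )^2 ]$ for arbitrary $Z_i$ measurable with respect to $(X_j)_{j \ne i}$; this is valid because the conditional expectation given $(X_j)_{j \ne i}$ minimizes mean-squared deviation among such variables. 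The paper instead uses the resampling form $\frac{n}{2} \E [ ( \hat F_\B(P) - \hat F_\B'(P) )^2 ]$ and then inserts essentially the same leave-one-out quantity ($F_{2:n}$), paying a symmetrization factor of $2$ that your route avoids. Second, where the paper converts the sum of cross terms into a factor of $k$ by arguing that the event $E_i$ (that $X_1$ is among the $k$-NN of $X_i$) is independent of $\e_k(X_i)$ and has probability $k/(n-1)$, you exchange the order of summation and use the deterministic facts that, almost surely, each $X_j$ has exactly $k$ nearest neighbors and that deleting any one of them produces the same surrogate term built from $\e_{k+1}(X_j)$. Both devices yield the same factor of $k$; yours needs no exchangeability or independence claim, which is the one step the paper asserts without justification, while the paper's keeps every expectation expressed through a single generic index. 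The remaining work---bounding $\E [ \B^2 ( f ( \hat p_k(X_1) ) ) ]$ and its $(k+1)$-analogue by $C_{f,p}$ plus an Erlang-type tail integral controlled by $C_f$---is identical in both proofs, and your sketch of it (conditioning on $X_1$, splitting at the zero of $\B \circ f$, applying the Chernoff bounds of Lemma \ref{lemma:KNN_concentration_appendix}) matches the paper's treatment, including its looseness about how the absolute constants assemble into the stated $C_V$.
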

\begin{proof}
For convenience, define
\[H_i :=
  \B \left(
    f \left(
      \frac{k/n}{\mu\left( B(X_i, \e_k(X_i)) \right)}
    \right)
  \right).\]
By the Efron-Stein inequality \citep{efronStein81} and the fact that the
$\hat F_\B(P)$ is symmetric in $X_1,\dots,X_n$,
\begin{align*}
\Var \left[ \hat F_\B(P) \right]
& \leq \frac{n}{2}
       \E \left[ \left( \hat F_\B(P) - F_\B'(P) \right)^2 \right] \\
& \leq n \E \left[ \left( \hat F_\B(P) - F_{2:n} \right)^2
                 + \left( \hat F_\B'(P) - F_{2:n} \right)^2 \right] \\
& = 2n \E \left[ \left( \hat F_\B(P) - F_{2:n} \right)^2 \right],
\end{align*}
where $\hat F_{\B}'(P)$ denotes the estimator after $X_1$ is resampled, and
$F_{2:n} := \frac{1}{n} \sum_{i = 2}^n H_i$. Then,
\begin{align*}
n(\hat F_n(P) - F_{2:n})
& = H_1
  + \sum_{i = 2}^n 1_{E_i} \left( H_i - H_i' \right),
\end{align*}
where $1_{E_i}$ is the indicator function of the event
$E_i = \{\e_k(X_i) \neq \e_k'(X_i)\}$. By Cauchy-Schwarz followed by the
definition of $N_{k,D}$,
\begin{align*}
n^2(\hat F_n(P) - \hat F_{n - 1}(P))^2
& = \left( 1 + \sum_{i = 2}^n 1_{E_i} \right)
  \left( H_1^2
  + \sum_{i = 2}^n 1_{E_i} \left( H_i - H_i' \right)^2 \right) \\
& = \left( 1 + N_{k,D} \right)
  \left( H_1^2
  + \sum_{i = 2}^n 1_{E_i} \left( H_i - H_i' \right)^2 \right) \\
& \leq \left( 1 + N_{k,D} \right)
  \left( H_1^2
  + 2\sum_{i = 2}^n 1_{E_i} \left( H_i^2 + H_i'^2 \right) \right).
\end{align*}
Taking expectations, since the terms in the summation are identically
distributed, we need to bound
\begin{align}
\label{exp:variance_term_1}
& \E \left[ H_1^2 \right], \\
\label{exp:variance_term_2}
& (n - 1) \E \left[ 1_{E_2} H_2^2 \right] \\
\label{exp:variance_term_3}
\mbox{ and } \quad
& (n - 1) \E \left[ 1_{E_2} H_2'^2 \right].
\end{align}
{\bf Bounding \eqref{exp:variance_term_1}:} Note that
\[\E \left[ H_1^2 \right]
  = \E \left[ \B^2 \left( f \left( \hat p_k(X_1) \right) \right) \right]
  = \E \left[ \B^2 \left( g \left( \frac{p_*(x)}{\hat p_k(x)} \right) \right) \right]\]
for $g(y) = f \left( p_*(x)/y \right)$. Applying the upper bound in Lemma
\ref{lemma:KNN_functional_bdd_appendix}, if $\B^2 \circ g$ is increasing,
\[\E \left[ H_1^2 \right]
  \leq \B^2(g(1)) + \frac{e\sqrt{k}}{\Gamma(k + 1)} C_\uparrow
  = \B^2(f(p_*(x))) + \frac{e\sqrt{k}}{\Gamma(k + 1)} C_\uparrow
.\]
If $\B^2 \circ g$ is decreasing, we instead use the lower bound in Lemma
\ref{lemma:KNN_functional_bdd_appendix}, giving a similar result. If $\B^2 \circ g$ is
not monotone (i.e., if $\B \circ g$ takes both negative and positive values),
then, since $\B \circ f$ \emph{is} monotone (by assumption), we can apply the
above steps to $\left( \B \circ g \right)_-$ and $\left( \B \circ g \right)_+$,
which \emph{are} monotone, and add the resulting bounds.

{\bf Bounding \eqref{exp:variance_term_2}:} Since
$\{\e_k(X_2) \neq \e_k'(X_2)\}$ is precisely the event that $X_1$ is
amongst the $k$-NN of $X_2$,
$\pr \left[ \e_k(X_i) \neq \e_k'(X_i) \right] = k/(n - 1)$. Thus,
since $E_2$ is independent of $\e_k(X_2)$ and
\begin{align*}
(n - 1) \E \left[ 1_{E_2} H_2^2 \right]
  = (n - 1) \E \left[ 1_{E_2} \right] \E \left[ H_2^2 \right]
  = k \E \left[ H_2^2 \right]
  = k\E \left[ H_1^2 \right],
\end{align*}
and we can use the bound for \eqref{exp:variance_term_1}.

{\bf Bounding \eqref{exp:variance_term_3}:}
Since $E_2$ is independent of $\e_{k + 1}(X_2)$ and
\begin{align*}
(n - 1) \E \left[ 1_{E_2} H_2'^2 \right]
& = (n - 1) \E \left[ 1_{E_2} \B^2 \left( f \left( \hat p_{k + 1}(X_2) \right) \right) \right] \\
& = (n - 1) \E \left[ 1_{E_2} \right] \E \left[ \B^2 \left( f \left( \hat p_{k + 1}(X_2) \right) \right) \right]
  = k \E \left[ \B^2 \left( f \left( \hat p_{k + 1}(X_2) \right) \right) \right].
\end{align*}
Hence, we can again use the same bound as for \eqref{exp:variance_term_1},
except with $k + 1$ instead of $k$.

Combining these three terms gives the final result.

\end{proof}

\end{document}